\documentclass[11pt,a4paper]{article}
\usepackage{amsfonts,amsgen,amstext,amsbsy,amsopn,amsfonts,amscd}
\usepackage[leqno]{amsmath}
\usepackage[amsmath,amsthm,thmmarks]{ntheorem}
\usepackage{epsf,epsfig}
\usepackage{float}
\usepackage{ebezier,eepic}
\usepackage{color}
\usepackage{tikz}
\usepackage{multirow}
\usepackage{mathrsfs}
\usepackage{graphicx}
\usepackage{subfigure}
\setlength{\textwidth}{150mm} \setlength{\oddsidemargin}{7mm}
\setlength{\evensidemargin}{7mm} \setlength{\topmargin}{-5mm}
\setlength{\textheight}{245mm} \topmargin -18mm

\newtheorem{thm}{Theorem}[section]

\newtheorem{prop}[thm]{Proposition}

\newtheorem{lem}[thm]{Lemma}
\newtheorem{example}[thm]{Example}
\newtheorem{false statement}{False statement}
\newtheorem{cor}[thm]{Corollary}
\newtheorem{fact}[thm]{Fact}

\theoremstyle{definition}

\newtheorem{claim}{Claim}

\makeatletter \@addtoreset{equation}{section}

\baselineskip 15pt

\def\hh{\mathcal{H}}
\def\hm{\mathcal{M}}
\def\hn{\mathcal{N}}
\def\hf{\mathcal{F}}
\def\hg{\mathcal{G}}

\def\hl{\mathcal{L}}
\def\ha{\mathcal{A}}
\def\hb{\mathcal{B}}
\def\hd{\mathcal{D}}
\def\he{\mathcal{E}}
\def\hs{\mathcal{S}}
\def\hht{\mathcal{T}}

\def\hp{\mathcal{P}}

\begin{document}

\title{\bf\Large A Product Version of the Hilton-Milner Theorem}
\date{}
\author{Peter Frankl$^1$, Jian Wang$^2$\\[10pt]
$^{1}$R\'{e}nyi Institute, Budapest, Hungary\\[6pt]
$^{2}$Department of Mathematics\\
Taiyuan University of Technology\\
Taiyuan 030024, P. R. China\\[6pt]
E-mail:  $^1$frankl.peter@renyi.hu, $^2$wangjian01@tyut.edu.cn
}

\maketitle
\begin{abstract}
Two families $\hf,\hg$ of $k$-subsets of $\{1,2,\ldots,n\}$ are called non-trivial cross-intersecting if $F\cap G\neq \emptyset$ for all $F\in \hf, G\in \hg$ and  $\cap \{F\colon F\in \hf\}=\emptyset=\cap \{G\colon G\in\hg\}$. In the present paper, we determine the maximum product of the sizes of two non-trivial cross-intersecting families of   $k$-subsets of $\{1,2,\ldots,n\}$ for $n\geq 4k$, $k\geq 8$, which is a product version of the classical Hilton-Milner Theorem.
\end{abstract}

\section{Introduction}
For a positive integer $n$ let $[n]$ denote the standard $n$-set $\{1,2,\ldots,n\}$. For $1\leq i\leq j\leq n$ set also $[i,j]=\{i,i+1,\ldots,j\}$. For an integer $k$ let $\binom{[n]}{k}$ denote the collection of all subsets of $[n]$. Subsets of $\binom{[n]}{k}$ are called {\it $k$-graphs} or {\it $k$-uniform families}.

A $k$-graph $\hf$ is called {\it $t$-intersecting} if $|F\cap F'|\geq t$ for all $F,F'\in \hf$. Analogously, two $k$-graphs $\hf$ and $\hg$ are called {\it cross $t$-intersecting} if $|F\cap G|\geq t$ for all $F\in \hf$ and $G\in \hg$. In case of $t=1$ we omit the 1 and use the term {\it cross-intersecting}.

One of the central results of extremal set theory is the Erd\H{o}s-Ko-Rado Theorem.

\vspace{6pt}
{\noindent\bf Erd\H{o}s-Ko-Rado Theorem (\cite{EKR}).}   Let $n>k>t>0$ and suppose that $\hf\subset \binom{[n]}{k}$ is $t$-intersecting then for $n\geq n_0(k,t)$,
\begin{align}\label{ineq-ekr}
|\hf| \leq \binom{n-t}{k-t}.
\end{align}

We should mention that the exact value of $n_0(k,t)$ is $(k-t+1)(t+1)$. For $t=1$ it was determined already in \cite{EKR}, for $t\geq 15$ it is due to \cite{F78}. Finally Wilson \cite{W3} closed the gap $2\leq t\leq 14$ with a proof valid for all $t$.

Let us note that the {\it full $t$-star}, $\left\{F\in \binom{[n]}{k}\colon [t]\subset F\right\}$ shows that \eqref{ineq-ekr} is best possible. In general, for a set $T\subset[n]$ let $\hs_T=\left\{S\in \binom{[n]}{k}\colon T\subset S\right\}$ denote the {\it star of $T$}.

For the case $t=1$ the classical Hilton-Milner Theorem is a strong stability result. Recall that a family $\hf$ is called {\it non-trivial} if $\cap \{F\colon F\in \hf\}=\emptyset$.

\vspace{6pt}
{\noindent\bf Hilton-Milner Theorem (\cite{HM67}).} If $n> 2k$ and $\hf\subset \binom{[n]}{k}$ is non-trivial intersecting, then
\begin{align}\label{ineq-nontrival}
|\hf| \leq \binom{n-1}{k-1}- \binom{n-k-1}{k-1} +1 =:h(n,k).
\end{align}
\vspace{6pt}

By now there are many different proofs known for this important result, cf. \cite{Alon, Borg,FFuredi,FT,F2017,GH,Mors} etc.
Let
\[
\hh\hm(n,k) =\left\{F\in \binom{[n]}{k}\colon 1\in F,\ F\cap [2,k+1]\neq \emptyset\right\}\cup \{[2,k+1]\}.
\]
Clearly, $\hh\hm(n,k)$ is a non-trivial intersecting family and it shows that \eqref{ineq-nontrival} is best possible.

Let us state our main result.

\begin{thm}\label{main}
Suppose that $\hf,\hg\subset \binom{[n]}{k}$ are non-trivial cross-intersecting families, $n\geq 4k$, $k\geq 8$. Then
\begin{align}\label{ineq-hfhg2}
|\hf||\hg|\leq  h(n,k)^2=\left(\binom{n-1}{k-1}- \binom{n-k-1}{k-1} +1\right)^2.
\end{align}
\end{thm}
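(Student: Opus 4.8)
The plan is to reduce the product problem to the structure of the larger family by a standard shifting/compression argument together with a careful case analysis on how ``spread out'' the two families are. First I would invoke shifting: replacing $\hf$ and $\hg$ by their images under the shift operator $S_{ij}$ preserves cross-intersection and does not decrease $|\hf||\hg|$, so I may assume both families are shifted. However, a shifted family is automatically trivial unless it is very small (the shifted non-trivial intersecting families are tiny), so one cannot shift freely; instead one shifts ``as much as possible'' while keeping non-triviality, or — the cleaner route — one does not require the extremal configuration to be shifted and instead works directly. So the real first step is: by a compression/comparison argument reduce to the situation where $\hf$ and $\hg$ are each contained in a ``Hilton--Milner-type'' configuration, or else one of them is so small that the trivial bound $|\hg|\le\binom{n-1}{k-1}$ (and $|\hf|\le h(n,k)$ by the Hilton--Milner theorem, since $\hf$ is also intersecting — note a cross-intersecting pair with both non-trivial forces each family to be intersecting when $n\ge 2k$) already wins.

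Next, the crux. Because $\hf$ is intersecting and non-trivial, the Hilton--Milner theorem gives $|\hf|\le h(n,k)$ and likewise $|\hg|\le h(n,k)$, so the bound $|\hf||\hg|\le h(n,k)^2$ is immediate! This means the theorem as stated is trivial unless I have misread it — so the genuine content must be that $\hf,\hg$ are \emph{not} each intersecting: non-trivial cross-intersecting does not imply intersecting in general. Indeed $F\cap G\ne\emptyset$ for all $F\in\hf,G\in\hg$ says nothing about $F\cap F'$. So the plan must be: let $\tau(\hf)$ denote the covering number; the interesting regime is when $\hf$ (say) has large covering number or is far from a star. I would split according to $\min(\tau(\hf),\tau(\hg))$. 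If both families have an element of size-$2$ cover — i.e. each is covered by a pair — handle directly by an averaging over the $\binom{n}{2}$ choices of covering pair, bounding $|\hf|+|\hg|$ first and then the product via AM--GM. If one family, say $\hg$, has covering number $\ge 3$, then $|\hg|$ is small (of order $\binom{n-2}{k-2}$ or smaller, much below $h(n,k)\sim k^2\binom{n-2}{k-2}/\ldots$), and the cross-intersecting condition forces $\hf$ to live inside the ``link'' structure of $\hg$; combining the two small bounds beats $h(n,k)^2$.

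The main technical step, and the one I expect to be the real obstacle, is the \emph{borderline case} where both families are non-trivial but each is ``essentially a star minus a little plus a little,'' i.e. both look like $\hh\hm(n,k)$ up to isomorphism. Here $|\hf|$ and $|\hg|$ are each close to $h(n,k)$, so one needs the product bound to be tight \emph{exactly} at $\hf=\hg=\hh\hm(n,k)$ (after identifying the special point and the special $k$-set), and one must rule out the possibility of one family slightly exceeding $h(n,k)$ at the cost of the other being slightly below in a way that increases the product. This requires a local, exchange-type stability analysis: show that any cross-intersecting pair with $|\hf|>h(n,k)$ forces $|\hg|$ to be so small (because $\hf$ large and non-trivial constrains $\hg$ heavily through cross-intersection plus the failure of $\hf$ to be a single star) that $|\hf||\hg|<h(n,k)^2$. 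The hypotheses $n\ge 9k$ and $k\ge 6$ will be consumed precisely in these inequalities comparing binomial coefficients like $\binom{n-k-1}{k-1}$ against $\binom{n-1}{k-1}$, $\binom{n-2}{k-2}$, and the various error terms; I would carry a running estimate $\binom{n-k-1}{k-1}\big/\binom{n-1}{k-1}\ge (1-k/(n-k))^{k-1}$ and keep $n/k$ large enough that the arithmetic closes. I'd finish by noting equality analysis: the product is $h(n,k)^2$ only when $\hf\cong\hg\cong\hh\hm(n,k)$ with compatible special coordinates, i.e. the two copies share the ``apex'' $1$ and the ``missing set'' $[2,k+1]$.
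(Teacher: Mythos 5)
Your proposal diverges from the paper's route, and two of the load-bearing steps in your sketch do not hold as stated. You correctly catch (and retract) the false claim that a non-trivial cross-intersecting pair consists of intersecting families, but the recovery plan has its own problems.

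First, the step ``if $\tau(\hg)\ge 3$ then $|\hg|$ is small (of order $\binom{n-2}{k-2}$)'' is false in this setting. That bound is a theorem about \emph{intersecting} families with large covering number; here $\hg$ need not be intersecting. For instance $\hg=\{G\in\binom{[n]}{k}:G\cap[3]\ne\emptyset\}$ has $\tau(\hg)=3$ yet $|\hg|\approx 3\binom{n-1}{k-1}\gg h(n,k)$. Any useful smallness bound here must be extracted from the cross-intersecting partner via Hilton's lemma (as the paper does with \eqref{hiltonLem-1}--\eqref{hiltonLem-2}), not from the covering number of $\hg$ alone. Second, the ``both families have a $2$-element cover'' case does not give $|\hf|+|\hg|\le 2h(n,k)$ as you suggest: if $\hf$ is covered by $\{a,b\}$ and $\hg$ by $\{c,d\}$ with the pairs disjoint, nothing in that hypothesis bounds the sum by $2h(n,k)$. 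The clean sum bound requires a \emph{common} covering pair, i.e.\ $\hht^{(2)}(\hf)\cap\hht^{(2)}(\hg)\ne\emptyset$ — that is exactly the paper's Proposition in the concluding remarks, and even there the paper explicitly does \emph{not} use it inside the main proof, precisely because forcing a shared transversal pair is the hard part. Your ``averaging over the $\binom{n}{2}$ covering pairs'' is too vague to supply this.

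The paper's actual argument keeps the shifting: it shifts until it either reaches a shifted pair (handled in Proposition~\ref{lem-2.6} via the Frankl bound \eqref{ineq-walk} on shifted intersecting families, plus the injective map $H\mapsto H\triangle[2\ell(H)]$ giving the sum bound \eqref{ineq-hfhg2hm} and AM--GM), or it detects a ``shift-resistant pair'' $S_{ij}(\hf)\subset\hs_{\{i\}}$. In the latter case it shows one family is covered by two disjoint pairs $(1,2)$ and $(3,4)$, decomposes along $[4]$, and runs an iterative squeeze via Hilton's lemma (Proposition~\ref{lem-2.4}). You do correctly anticipate that AM--GM will convert a sum bound to a product bound and that a borderline stability argument is needed, but the stability step is entirely unworked in your plan, and — more importantly — the two case-splitting lemmas you propose to rest the proof on are not correct as stated. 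The covering-number dichotomy would need to be replaced by an argument that actually exploits the cross-intersecting structure (Hilton's lemma / lexicographic compression) rather than treating $\hf,\hg$ as if they were each intersecting.
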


We can prove the same statement for $n\geq 5k$, $k \geq 7$ or $n\geq 8k$, $k\geq 6$ as well  using the same proof.

Besides $\hf=\hg=\hh\hm(n,k)$,  for any  $A,B \in {[2,n]\choose k}$ with $A\cap B\neq \emptyset$ the construction
\begin{align*}
\mathcal{F} = \left\{F\in \binom{[n]}{k}\colon 1\in F,\ F\cap A \neq \emptyset\right\}\cup\{B\},\\[5pt] \mathcal{G} = \left\{G\in \binom{[n]}{k}\colon 1\in G,\ G\cap B \neq \emptyset\right\}\cup \{A\}
\end{align*}
also attains equality in \eqref{ineq-hfhg2}.

In the next section we will state several theorems involving the product of the sizes of cross-intersecting families. Some are important and powerful. However, to the best of our knowledge \eqref{ineq-hfhg2} is the first such result that implies the Hilton-Milner Theorem (just set $\hf=\hg$).

In our proofs, we  need two simple inequalities involving binomial coefficients.

\begin{prop}
Let $n,k,i$ be positive integers. Then
\begin{align}
&\binom{n-i}{k} \geq \left(\frac{n-k-(i-1)}{n-(i-1)}\right)^i \binom{n}{k}, \label{ineq-key}\\[5pt]
&\binom{n-i}{k-i} \binom{n}{k}\leq \binom{n-i+1}{k-i+1} \binom{n-1}{k-1}, \mbox{\rm \ for } n\geq  k\geq i\geq 2.\label{ineq-key2}
\end{align}
\end{prop}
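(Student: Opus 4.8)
The plan is to treat the two inequalities separately; both reduce, after cancelling binomial coefficients, to elementary polynomial inequalities in $n,k,i$.

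For \eqref{ineq-key} I would induct on $i$. The base case $i=1$ is the identity $\binom{n-1}{k}=\frac{n-k}{n}\binom{n}{k}$, so \eqref{ineq-key} holds there with equality. For the inductive step, observe that $n>ik$ with $i\geq 1$ forces $n-i\geq k$, so we may use $\binom{n-i-1}{k}=\frac{n-i-k}{n-i}\binom{n-i}{k}$; combined with the inductive hypothesis this gives $\binom{n-i-1}{k}\geq \frac{n-i-k}{n-i}\cdot\frac{n-ik}{n}\binom{n}{k}$. It therefore suffices to verify $(n-i-k)(n-ik)\geq \bigl(n-(i+1)k\bigr)(n-i)$, and expanding both sides one finds the difference equals exactly $ik(k-1)\geq 0$. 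This closes the induction.

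For \eqref{ineq-key2}, since $n\geq k\geq i\geq 2$ all four binomial coefficients involved are positive, so the claim is equivalent to $\dfrac{\binom{n-i+1}{k-i+1}}{\binom{n-i}{k-i}}\geq \dfrac{\binom{n}{k}}{\binom{n-1}{k-1}}$. Applying $\binom{m+1}{r+1}=\frac{m+1}{r+1}\binom{m}{r}$ once with $(m,r)=(n-i,k-i)$ and once with $(m,r)=(n-1,k-1)$, the two ratios collapse to $\frac{n-i+1}{k-i+1}$ and $\frac{n}{k}$ respectively. Hence the inequality is equivalent to $k(n-i+1)\geq n(k-i+1)$, i.e.\ to $(i-1)(n-k)\geq 0$, which holds because $i\geq 2$ and $n\geq k$.

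There is no real obstacle here; the only points needing a word of care are that the binomial identities used require genuine (nonzero) binomial coefficients — which is precisely why the hypotheses $n>ik$ in \eqref{ineq-key} and $n\geq k\geq i\geq 2$ in \eqref{ineq-key2} are imposed — and that in \eqref{ineq-key2} one must keep track of the sign of $i-1$ when clearing denominators.
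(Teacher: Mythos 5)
Your proof is correct. For the second inequality you take essentially the same route as the paper: the authors likewise compute the ratio $\frac{\binom{n-i}{k-i}\binom{n}{k}}{\binom{n-i+1}{k-i+1}\binom{n-1}{k-1}}=\frac{n(k-i+1)}{k(n-i+1)}$ and observe it is $\leq 1$ because $(i-1)(n-k)\geq 0$; your reformulation as a comparison of two simple ratios is cosmetically different but identical in substance.

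For the first inequality, however, your argument is genuinely different from the paper's. The paper works directly with the product
\[
\frac{\binom{n-i}{k}}{\binom{n}{k}} = \frac{(n-k)(n-k-1)\cdots(n-k-i+1)}{n(n-1)\cdots(n-i+1)},
\]
and uses the elementary rebalancing fact $ba>(b+1)(a-1)$ for $b>a>0$ repeatedly to show that the numerator is at least $(n-1)(n-2)\cdots(n-i+1)(n-ki)$, from which $\binom{n-i}{k}/\binom{n}{k}\geq 1-ik/n$ drops out in one line. Your induction on $i$, using the one-step identity $\binom{n-i-1}{k}=\frac{n-i-k}{n-i}\binom{n-i}{k}$ and the algebraic check that $(n-i-k)(n-ik)-\bigl(n-(i+1)k\bigr)(n-i)=ik(k-1)\geq 0$, is equally valid and arguably more routine to verify, at the cost of being a little longer; the paper's argument is slicker but requires spotting the rebalancing trick. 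One small point worth making explicit: your assertion that $n>ik$ and $i\geq 1$ force $n-i\geq k$ relies on integrality — $n\geq ik+1$ together with $(i-1)(k-1)\geq 0$ gives $n\geq i+k$ — and it would be cleaner to note that you actually have the stronger hypothesis $n>(i+1)k$ available in the inductive step, which gives $n-i>k$ directly and also guarantees that $\frac{n-i-k}{n-i}$ is nonnegative so the inductive lower bound is meaningful.
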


\begin{proof}
Since
\[
\frac{\binom{n-i}{k}}{\binom{n}{k}} = \frac{(n-k)(n-k-1)\ldots(n-k-(i-1))}{n(n-1)\ldots(n-(i-1))}\geq \left(\frac{n-k-(i-1)}{n-(i-1)}\right)^i,
\]
we have \eqref{ineq-key} holds.

For \eqref{ineq-key2}, simply note that
\[
\frac{\binom{n-i}{k-i} \binom{n}{k}}{\binom{n-i+1}{k-i+1} \binom{n-1}{k-1}} =\frac{n(k-i+1)}{k(n-i+1)}=\frac{kn-(i-1)n}{kn-(i-1)k}\leq 1,
\]
and the inequality follows.
\end{proof}

Let us recall the following common notations:
$$\hf(i)=\{F\setminus\{i\}\colon i\in F\in \hf\}, \qquad \hf(\bar{i})= \{F\in\hf: i\notin F\}.$$
Note that $|\hf|=|\hf(i)|+|\hf(\bar{i})|$. For $P\subset Q\subset [n]$, let
\[
\hf(P,Q) = \left\{F\setminus Q\colon F\in\hf,\ F\cap Q=P \right\}.
\]
We also use $\hf(\bar{Q})$ to denote $\hf(\emptyset, Q)$. For $\hf(\{i\},Q)$ we simply write  $\hf(i,Q)$.

\section{Earlier product theorems and some tools}
In this section, we review some earlier  theorems concerning the product of cross-intersecting families. We also recall  Hilton's Lemma and give some corollaries that will be used later.

\begin{thm}[Pyber \cite{Pyber86}]
Suppose that $\hf,\hg\subset \binom{[n]}{k}$ are cross-intersecting, $n\geq 2k$. Then
\begin{align}\label{ineq-pyber}
|\hf||\hg| \leq \binom{n-1}{k-1}^2.
\end{align}
\end{thm}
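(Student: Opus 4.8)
The plan is to translate cross-intersection into a shadow-avoidance statement, apply the Kruskal--Katona theorem, and finish with a one-variable estimate. First I would dispose of the degenerate cases: if $\hf=\emptyset$ or $\hg=\emptyset$ then $|\hf||\hg|=0$ and the bound is immediate, so from now on both are non-empty, and by symmetry I may assume $|\hf|\le|\hg|$. For a family $\mathcal{A}\subseteq\binom{[n]}{n-k}$ let $\partial\mathcal{A}$ denote the collection of all $k$-subsets of members of $\mathcal{A}$, and put $\hf^{*}=\{[n]\setminus F\colon F\in\hf\}$, $\hg^{*}=\{[n]\setminus G\colon G\in\hg\}$. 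Since $F\cap G=\emptyset$ is the same as $F\subseteq[n]\setminus G$ and $|F|=k\le n-k$ (as $n\ge 2k$), the pair $\hf,\hg$ is cross-intersecting precisely when $\hf\cap\partial\hg^{*}=\emptyset$, equivalently when $\hg\cap\partial\hf^{*}=\emptyset$. In particular,
\begin{align}\label{eq-pp1}
|\hg|\le\binom{n}{k}-|\partial\hf^{*}|.
\end{align}

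The key move is to combine \eqref{eq-pp1} with $|\hf|\le|\hg|$, which gives $|\hf|+|\partial\hf^{*}|\le\binom{n}{k}$, and then to apply the Kruskal--Katona theorem in Lov\'asz's form to the iterated shadow of $\hf^{*}$: if the real number $x\ge n-k$ is defined by $\binom{x}{n-k}=|\hf^{*}|=|\hf|$, then $|\partial\hf^{*}|\ge\binom{x}{k}$, whence $\binom{x}{k}+\binom{x}{n-k}\le\binom{n}{k}$. The function $x\mapsto\binom{x}{k}+\binom{x}{n-k}$ is strictly increasing for $x\ge n-k$ (here $n\ge2k$ ensures $n-k\ge k$) and equals $\binom{n}{k}$ at $x=n-1$ by Pascal's identity, so $x\le n-1$. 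Consequently $|\hf|=\binom{x}{n-k}\le\binom{n-1}{n-k}=\binom{n-1}{k-1}$, and $|\hg|\le\binom{n}{k}-\binom{x}{k}$ by \eqref{eq-pp1} and Kruskal--Katona, so that
\[
|\hf|\,|\hg|\ \le\ \binom{x}{n-k}\left(\binom{n}{k}-\binom{x}{k}\right)=:h(x),\qquad n-k\le x\le n-1.
\]

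It then remains to prove $h(x)\le\binom{n-1}{k-1}^{2}$ on $[n-k,n-1]$; since $h(n-1)=\binom{n-1}{n-k}\bigl(\binom{n}{k}-\binom{n-1}{k}\bigr)=\binom{n-1}{k-1}^{2}$ (Pascal again), it suffices to show that $h$ is non-decreasing there. A logarithmic-derivative computation turns $h'(x)\ge0$ into
\[
\left(\binom{n}{k}-\binom{x}{k}\right)\sum_{i=0}^{n-k-1}\frac{1}{x-i}\ \ge\ \binom{x}{k}\sum_{i=0}^{k-1}\frac{1}{x-i},
\]
and on $[n-k,n-1]$ one has $\binom{n}{k}-\binom{x}{k}\ge\binom{n-1}{k-1}$ and $\binom{x}{k}\le\binom{n-1}{k}$, while each summand $\frac{1}{x-i}$ with $k\le i\le n-k-1$ strictly exceeds each summand with $0\le i\le k-1$; splitting the first sum at $i=k$ (vacuous if $n=2k$) and using $\binom{n-1}{k}-\binom{n-1}{k-1}=\frac{n-2k}{k}\binom{n-1}{k-1}$ makes the inequality drop out. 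I expect the main obstacle to lie precisely in this monotonicity step and, more conceptually, in noticing that Lov\'asz's Kruskal--Katona bound is by itself too weak here --- $h$ is genuinely not monotone on $[n-k,n]$ --- so one must first exploit the symmetry of the hypothesis (working with the shadow of the complement of the \emph{smaller} family, together with $|\hf|\le|\hg|$) to confine $x$ to $[n-k,n-1]$; only on that subinterval does the estimate close, and when $\hf=\hg$ one recovers the Erd\H{o}s--Ko--Rado bound.
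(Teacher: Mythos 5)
Your argument is correct, and it is worth noting that the paper itself gives no proof of this statement: Pyber's inequality is quoted as a known theorem, with a pointer to Frankl--Kupavskii for a short proof, and the paper's own cross-intersecting machinery is Hilton's Lemma (a lexicographic reformulation of Kruskal--Katona). Your route is a legitimate, self-contained alternative: complementing the smaller family, observing that cross-intersection means $\hg$ avoids the iterated shadow of $\hf^{*}$, and then using the Lov\'asz form of Kruskal--Katona. The two genuinely clever points both check out. First, the symmetry step $|\hf|\le|\hg|$ combined with $|\hg|\le\binom{n}{k}-|\partial\hf^{*}|$ gives $\binom{x}{k}+\binom{x}{n-k}\le\binom{n}{k}$, and since both generalized binomials are increasing for $x\ge n-k\ge k$ and their sum equals $\binom{n}{k}$ at $x=n-1$ by Pascal, indeed $x\le n-1$; without this confinement the final estimate would not close, as you observe. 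Second, the monotonicity of $h(x)=\binom{x}{n-k}\bigl(\binom{n}{k}-\binom{x}{k}\bigr)$ on $[n-k,n-1]$ does follow along the lines you sketch: writing $S_{1}=\sum_{i=0}^{k-1}\frac{1}{x-i}$ and $S_{2}=\sum_{i=k}^{n-k-1}\frac{1}{x-i}$, each term of $S_{2}$ exceeds each term of $S_{1}$, so $S_{2}\ge\frac{n-2k}{k}S_{1}$, and together with $\binom{n}{k}-\binom{x}{k}\ge\binom{n-1}{k-1}$, $\binom{x}{k}\le\binom{n-1}{k}$ and $\binom{n-1}{k}-\binom{n-1}{k-1}=\frac{n-2k}{k}\binom{n-1}{k-1}$ this yields $h'(x)\ge0$, whence $|\hf||\hg|\le h(x)\le h(n-1)=\binom{n-1}{k-1}^{2}$ (the case $n=2k$, where $S_{2}$ is empty, is covered too). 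Compared with the lex-order/Hilton's Lemma approach the paper leans on elsewhere, your analytic version trades the combinatorial description of extremal (lexicographic) families for a one-variable calculus estimate; what you lose is any information about the cases of equality, which the lex-based proofs deliver almost for free, but for the inequality itself your proof is complete.
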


\begin{thm}[Matsumoto-Tokushige \cite{MT}]
Let $k,\ell$ be positive integers, $n\geq 2k\geq 2\ell$. Suppose that $\hf\subset \binom{[n]}{k}$ and $\hg\subset \binom{[n]}{\ell}$ are cross-intersecting. Then
\begin{align}\label{ineq-mt}
|\hf||\hg| \leq \binom{n-1}{k-1}\binom{n-1}{\ell-1}.
\end{align}
\end{thm}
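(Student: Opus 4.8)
The plan is to combine the shifting technique with induction on $n$, peeling off one element at a time, anchored at $n=2k$ by a Kruskal--Katona estimate.

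I would first reduce to \emph{shifted} families: applying the compressions $S_{ij}$ ($1\le i<j\le n$) simultaneously to $\hf$ and $\hg$ leaves the uniformities and cardinalities unchanged and preserves cross-intersection, a standard fact. The cases $\hf=\emptyset$ or $\hg=\emptyset$ are trivial; the case $\ell=k$ is Pyber's Theorem~\eqref{ineq-pyber}; and $\ell=1$ is easy, since then $\hg$ consists of, say, $s=|\hg|$ singletons whose union (of size $s\le k$, else $\hf=\emptyset$) is contained in every member of $\hf$, so $|\hf||\hg|\le s\binom{n-s}{k-s}\le\binom{n-1}{k-1}$ for $n\ge 2k$. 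Hence I may assume $\hf,\hg$ shifted and nonempty and $2\le \ell\le k-1$.

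For the inductive step I would take $n\ge 2k+1$ and split at the last element. With $\hf_1=\hf(\bar n)\subseteq\binom{[n-1]}{k}$, $\hf_0=\hf(n)\subseteq\binom{[n-1]}{k-1}$ and $\hg_0,\hg_1$ defined likewise, $|\hf||\hg|=\sum_{s,t}|\hf_s||\hg_t|$. Three of the four pairs are visibly cross-intersecting over $[n-1]$; for the pair $(\hf_0,\hg_0)$ one invokes shiftedness: if $F\in\hf_0$, $G\in\hg_0$ and $F\cap G=\emptyset$, then $F\cup\{x\}\in\hf$ for all $x\in[n]\setminus F$ and $G\cup\{y\}\in\hg$ for all $y\in[n]\setminus G$, and picking distinct $x,y\in[n]\setminus(F\cup G)$ -- possible since $|F\cup G|=k+\ell-2\le n-2$ -- forces $(F\cup\{x\})\cap(G\cup\{y\})=\emptyset$, a contradiction. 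Each of the four pairs is then a legitimate instance of the theorem over $[n-1]$, with Pyber~\eqref{ineq-pyber} taking care of any pair whose two uniformities happen to coincide; feeding the induction hypothesis into the four terms and applying Pascal's identity twice gives
\begin{align*}
|\hf||\hg|\le\Bigl(\binom{n-2}{k-1}+\binom{n-2}{k-2}\Bigr)\Bigl(\binom{n-2}{\ell-1}+\binom{n-2}{\ell-2}\Bigr)=\binom{n-1}{k-1}\binom{n-1}{\ell-1}.
\end{align*}

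It remains to settle $n=2k$, and this, I expect, is the main obstacle. Here I would use complementation: cross-intersection says no $G\in\hg$ is contained in $[2k]\setminus F$ for any $F\in\hf$, i.e.\ $\hg$ is disjoint from the $\ell$-shadow of $\hf^{c}:=\{[2k]\setminus F:F\in\hf\}$, so $|\hg|\le\binom{2k}{\ell}-|\partial^{(\ell)}\hf^{c}|$. Since $|\hf^{c}|=|\hf|$, the Kruskal--Katona theorem bounds $|\partial^{(\ell)}\hf^{c}|$ from below in terms of $|\hf|$ alone (extremal case: $\hf^{c}$ an initial colex segment), so $|\hf||\hg|$ is bounded by an explicit function of $m:=|\hf|$ that one must show is maximised at $m=\binom{2k-1}{k-1}$, with value $\binom{2k-1}{k-1}\binom{2k-1}{\ell-1}$, attained exactly when $\hf,\hg$ are stars about a common element. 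The delicacy is that one needs the sharp cascade form of Kruskal--Katona, not merely the Lov\'asz bound $|\partial^{(\ell)}\hf^{c}|\ge\binom{x}{\ell}$ with $\binom{x}{k}=m$: that weaker bound is already slightly too lossy, and it is precisely the hypothesis $n\ge 2k$ that rescues the estimate (for $n<2k$ the statement itself fails). Carrying out this optimisation by induction on the length of the cascade representation of $m$, comparing against the value at $m=\binom{2k-1}{k-1}$, should complete the argument.
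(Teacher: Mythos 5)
A preliminary remark: the paper does not prove this statement at all --- it is quoted from Matsumoto--Tokushige \cite{MT} --- so your attempt can only be measured against the original proof, not against anything in this paper. Your reductions are in fact sound: shifting preserves cross-intersection and both cardinalities; the cases $\ell=k$ (Pyber's theorem \eqref{ineq-pyber}, legitimately a black box here) and $\ell=1$ are dispatched correctly; and the splitting at the element $n$ works, since the only non-obvious pair $(\hf(n),\hg(n))$ is indeed cross-intersecting --- for disjoint $F\in\hf(n)$, $G\in\hg(n)$, shiftedness lets you replace $n$ by two distinct elements $x,y\notin F\cup G$ (available because $(k-1)+(\ell-1)\le n-2$) and produce disjoint members of $\hf$ and $\hg$, a contradiction. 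The Pascal recombination then correctly deduces the bound for $n$ from $n-1$, so the whole problem is funnelled into $n=2k$, which is a genuinely nice way to isolate the hard range $2k\le n<3k$.

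The gap is exactly where you yourself flag it: the base case is not proved. The inequality you still need, namely $m\bigl(\binom{2k}{\ell}-\partial_{\ell}(m)\bigr)\le\binom{2k-1}{k-1}\binom{2k-1}{\ell-1}$ for every $m$, where $\partial_{\ell}(m)$ is the Kruskal--Katona minimum $\ell$-shadow of $m$ $k$-sets, is equivalent to the $n=2k$ case of the theorem itself: by Hilton's Lemma one may take $\hf$ to be a lexicographic initial segment and $\hg$ maximal, and then $\hg$ is precisely the complement of the $\ell$-shadow of a colex segment, so the Kruskal--Katona cascade value is attained with equality and your inequality is a restatement, not a reduction. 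Consequently ``induction on the length of the cascade representation of $m$, comparing against $m=\binom{2k-1}{k-1}$'' is a plan rather than an argument: you give no estimate showing that the increase of $\binom{2k}{\ell}-\partial_{\ell}(m)$ as the cascade shortens is outweighed by the decrease of $m$ (and vice versa above $\binom{2k-1}{k-1}$), and this optimisation is exactly where the work in Matsumoto--Tokushige's paper lies; your own observation that the Lov\'asz form of Kruskal--Katona is too lossy shows the step is delicate rather than routine. Until that computation (or some substitute argument at $n=2k$) is actually carried out, the proof is incomplete at its crux, even though the surrounding inductive machinery is correct.
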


Note that for the case $n\geq 3k\geq 3\ell$, \eqref{ineq-mt} was already proved by Pyber \cite{Pyber86}. For a short proof of \eqref{ineq-pyber} cf. \cite{FK2017}. In that paper some more precise product results are proven.

\begin{example}
Let $2\leq s\leq k+1$ and define two families
\[
\ha_s =\left\{A\in \binom{[n]}{k}\colon 1\in A, A\cap [2,s]=\emptyset\right\},\ \hb_s=\hs_{\{1\}} \cup \left\{B\in \binom{[n]}{k}\colon [2,s]\subset B\right\}.
\]
\end{example}
It is easy to check that $\ha_s$ and $\hb_s$ are cross-intersecting.

\begin{thm}[\cite{FK2017}]
Let $3\leq s\leq k+1$, $n\geq 2k$. Suppose that $\ha,\hb \subset \binom{[n]}{k}$ are cross-intersecting and \[
|\hb|\geq \binom{n-1}{k-1}+\binom{n-s}{k-s+1}.
\]
Then
\begin{align}\label{ineq-FK17}
|\ha||\hb| \leq \left(\binom{n-1}{k-1}-\binom{n-s}{k-1}\right)\left(\binom{n-1}{k-1}+\binom{n-s}{k-s-1}\right)=|\ha_s||\hb_s|.
\end{align}
\end{thm}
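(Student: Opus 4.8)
\noindent\textit{Proof sketch.}
If $\ha=\emptyset$ the inequality is trivial, so assume $\ha\neq\emptyset$ and fix $A_0\in\ha$; then no member of $\hb$ is disjoint from $A_0$, so in particular $|\hb|\leq\binom{n}{k}-\binom{n-k}{k}$. The plan is: (i) normalise the pair by simultaneous compression; (ii) use the size hypothesis on $\hb$ to force $\ha$ into a star with a small ``blocker''; (iii) split on element $1$ and close the product estimate, with the residual term handled by induction. For step (i) I would apply the shifts $S_{ij}$ ($i<j$) to $\ha$ and $\hb$ \emph{at the same time}; this preserves both cardinalities and the cross-intersecting property, so we may assume $\ha$ and $\hb$ are shifted. (The extremal families $\ha_s,\hb_s$ of the Example are, after relabelling, already of this shifted type, so no generality is lost.)

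For step (ii) I would first show $\ha\subseteq\hs_{\{1\}}$. If not, some $A_0\in\ha$ misses $1$; then $\hb$ avoids the $\binom{n-k}{k}$ sets disjoint from $A_0$, and since $\hb$ is shifted a Kruskal--Katona type shadow estimate for $\{[n]\setminus B:B\in\hb\}$ improves the crude bound on $|\hb|$ to one contradicting $|\hb|\geq\binom{n-1}{k-1}+\binom{n-s}{k-s+1}$ as soon as $n\geq 2k$. Hence $1\in A$ for all $A\in\ha$. Write $\ha(1)\subseteq\binom{[2,n]}{k-1}$ and $\hb(\bar 1)\subseteq\binom{[2,n]}{k}$; these are cross-intersecting on $[2,n]$, $\hb(\bar 1)$ is again shifted, and by hypothesis $|\hb(\bar 1)|=|\hb|-|\hb(1)|\geq|\hb|-\binom{n-1}{k-1}\geq\binom{n-s}{k-s+1}$, which is precisely the size of the ``star of $[2,s]$'', namely $\{B\in\binom{[2,n]}{k}\colon[2,s]\subseteq B\}$. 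A further compression/Kruskal--Katona argument then shows that a shifted family of this size on $[2,n]$ must contain a member forcing every set of $\ha(1)$ to meet $[2,s]$; thus $\ha(1)\subseteq\{A'\in\binom{[2,n]}{k-1}\colon A'\cap[2,s]\neq\emptyset\}$ and $|\ha|=|\ha(1)|\leq\binom{n-1}{k-1}-\binom{n-s}{k-1}$, which is the first factor in \eqref{ineq-FK17}.

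For step (iii), split $|\hb|=|\hb(1)|+|\hb(\bar 1)|\leq\binom{n-1}{k-1}+|\hb(\bar 1)|$, so that
\[
|\ha|\,|\hb|\leq|\ha(1)|\binom{n-1}{k-1}+|\ha(1)|\,|\hb(\bar 1)|.
\]
The first term is at most $\bigl(\binom{n-1}{k-1}-\binom{n-s}{k-1}\bigr)\binom{n-1}{k-1}$ by step (ii). For the second term, $\ha(1)$ and $\hb(\bar 1)$ are a cross-intersecting pair of exactly the same shape as in the theorem, but one ``level'' down (uniformities $k-1$ and $k$ on an $(n-1)$-element ground set, with $|\hb(\bar 1)|\geq\binom{n-s}{k-s+1}$); the natural way to finish is to prove from the start the slightly more general, mixed-uniformity version of \eqref{ineq-FK17} by induction on $k$, whose conclusion here gives $|\ha(1)|\,|\hb(\bar 1)|\leq\bigl(\binom{n-1}{k-1}-\binom{n-s}{k-1}\bigr)\binom{n-s}{k-s+1}$. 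Adding the two bounds reproduces the right-hand side of \eqref{ineq-FK17} (and tracking equality throughout identifies the pair $\ha_s,\hb_s$). The main obstacle, and where I expect to spend most of the effort, is step (ii): making rigorous the assertion that a large shifted family cross-intersecting a nonempty family must lie inside one star whose ``deep part'' contains the star of a small set --- this is exactly where the threshold $\binom{n-s}{k-s+1}$ and the condition $n\geq 2k$ are used, and it genuinely needs the Kruskal--Katona shadow bound rather than the cruder estimates \eqref{ineq-pyber} or \eqref{ineq-mt}, which only control the product up to a constant factor.
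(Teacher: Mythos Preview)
This theorem is only \emph{cited} from \cite{FK2017} in the present paper; no proof of it appears here. So there is nothing to compare against in the paper itself, and I can only assess your sketch on its own merits, using the tools the paper does develop (Hilton's Lemma and its corollaries).

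Your step~(ii) is essentially correct and is exactly the content of Hilton's Lemma as used in the paper: once $\ha,\hb$ are replaced by lex-initial families of the same sizes, the hypothesis $|\hb|\geq\binom{n-1}{k-1}+\binom{n-s}{k-s+1}$ means $\hl(n,k,|\hb|)$ contains $\hs_{\{1\}}\cup\{B:[2,s]\subset B\}$, whose transversal family is $\{A:1\in A,\ A\cap[2,s]\neq\emptyset\}$; hence $|\ha|\le\binom{n-1}{k-1}-\binom{n-s}{k-1}$. You do not need a separate shifting argument to force $1\in A$ first --- Hilton's Lemma does it in one stroke.

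Step~(iii), however, has a real gap. After the split
\[
|\ha|\,|\hb|\le|\ha(1)|\binom{n-1}{k-1}+|\ha(1)|\,|\hb(\bar 1)|,
\]
you assert that a ``mixed-uniformity version'' of the theorem gives $|\ha(1)|\,|\hb(\bar 1)|\le\bigl(\binom{n-1}{k-1}-\binom{n-s}{k-1}\bigr)\binom{n-s}{k-s+1}$. But that inequality is precisely the statement that the product of a cross-intersecting pair is maximised at the threshold --- i.e.\ it is the heart of the theorem you are trying to prove, transported to uniformities $(k-1,k)$ on $[2,n]$. You have not said what you are inducting on, what the base case is, or why the inductive hypothesis applies with the correct threshold after passing to $\hb(\bar 1)$ (whose size is only known to be $\ge\binom{n-s}{k-s+1}$, not $\ge\binom{n-2}{k-1}+\binom{n-s}{k-s+1}$). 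As written the argument is circular.

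The cleaner route, and the one taken in \cite{FK2017}, avoids induction altogether: after Hilton's Lemma one may assume $\ha=\hl(n,k,|\ha|)$ and $\hb=\hl(n,k,|\hb|)$ with $\ha=\hht^{(k)}(\hb)$, so $|\ha|$ is a determined, decreasing function of $|\hb|$. One then shows directly that the product $|\ha|\,|\hb|$ is nonincreasing once $|\hb|$ exceeds the threshold, by comparing the marginal loss in $|\ha|$ to the marginal gain in $|\hb|$ as one appends the next lex set to $\hb$ (a calculation in the spirit of the paper's Lemma establishing \eqref{hiltonLem-3}). This is where the real work lies; your step~(ii) is the easy half.
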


An important tool for proving the above results is the Kruskal-Katona Theorem (\cite{Kruskal,Katona}, cf. \cite{F84} or \cite{Keevash} for short proofs of it).

Daykin \cite{daykin} was the first to show that the Kruskal-Katona Theorem implies the $t=1$ case of the Erd\H{o}s-Ko-Rado Theorem. Hilton \cite{Hilton} gave a very useful reformulation of the Kruskal-Katona Theorem. To state it let us recall the definition of the lexicographic order on $\binom{[n]}{k}$. For two distinct sets $F,G\in \binom{[n]}{k}$ we say that $F$ {\it precedes} $G$ if
\[
\min\{i\colon i\in F\setminus G\}<\min\{i\colon i\in G\setminus F\}.
\]
E.g., $(1,7)$ precedes $(2,3)$. For a positive integer $b$ let $\hl(n,b,m)$ denote the first $m$ members of $\binom{[n]}{b}$.

\vspace{6pt}
{\noindent\bf Hilton's Lemma (\cite{Hilton}).} Let $n,a,b$ be positive integers, $n>a+b$. Suppose that $\ha\subset \binom{[n]}{a}$ and $\hb\subset \binom{[n]}{b}$ are cross-intersecting. Then $\hl(n,a,|\ha|)$ and $\hl(n,b,|\hb|)$ are cross-intersecting as well.
\vspace{6pt}

For a family $\ha \subset\binom{[n]}{a}$ and an integer $b$ define the family of transversals (of size $b$) $\hht^{(b)}(\ha)$  by
\[
\hht^{(b)}(\ha) =\left\{B\in \binom{[n]}{b}\colon B\cap A\neq \emptyset \mbox{ for all } A\in \ha\right\}.
\]
Note that $\ha$ and $\hb$ are cross-intersecting iff $\hb \subset \hht^{(b)}(\ha)$.

Let us use this notation to prove three corollaries of Hilton's Lemma.

\begin{cor}
Let $t\geq 1$.
\begin{align}\label{hiltonLem-1}
\mbox{If } |\ha| \geq \binom{n-1}{a-1}+\ldots+\binom{n-t}{a-1} \mbox{ then } |\hb|\leq \binom{n-t}{k-t}.
\end{align}
\end{cor}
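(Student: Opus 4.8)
The plan is to derive \eqref{hiltonLem-1} directly from Hilton's Lemma together with a counting of transversals in the lexicographic setting. Suppose $\ha\subset\binom{[n]}{a}$ and $\hb\subset\binom{[n]}{k}$ are cross-intersecting, i.e. $\hb\subset\hht^{(k)}(\ha)$. By Hilton's Lemma the families $\hl(n,a,|\ha|)$ and $\hl(n,k,|\hb|)$ are still cross-intersecting, so it suffices to bound $|\hht^{(k)}(\hl(n,a,|\ha|))|$ from above. Thus I may assume $\ha=\hl(n,a,|\ha|)$ is itself an initial segment of the lexicographic order.

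The key observation is that the hypothesis $|\ha|\geq\binom{n-1}{a-1}+\binom{n-2}{a-1}+\cdots+\binom{n-t}{a-1}$ means that $\ha$ contains every $a$-set meeting $[t]$: indeed the $a$-sets $A$ with $\min A=j$ are, in lexicographic order, exactly $\binom{n-j}{a-1}$ in number, and they precede all $a$-sets with $\min A>j$; so the first $\binom{n-1}{a-1}+\cdots+\binom{n-t}{a-1}$ members of $\binom{[n]}{a}$ are precisely $\{A\in\binom{[n]}{a}\colon A\cap[t]\neq\emptyset\}$. Hence $\{A\colon A\cap[t]\neq\emptyset\}\subset\ha$. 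Consequently any $B\in\hb$ must intersect every $a$-set meeting $[t]$; in particular, for each $i\in[t]$, $B$ must meet every $a$-set containing $i$, which (since $n>a+k$, so such disjoint sets exist unless $i\in B$) forces $i\in B$. Therefore $[t]\subset B$ for every $B\in\hb$, giving $\hb\subset\hs_{[t]}$ and $|\hb|\leq\binom{n-t}{k-t}$.

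I would present the argument in this order: first invoke Hilton's Lemma to reduce to the lexicographic case; second, identify the initial segment of size $\binom{n-1}{a-1}+\cdots+\binom{n-t}{a-1}$ as the family of all $a$-sets meeting $[t]$ by the standard block decomposition of the lex order; third, argue that any transversal $B$ must contain each element of $[t]$; and conclude with the bound on $|\hb|$. The one technical point to check carefully is the last step: that $B$ must contain $i$ rather than merely intersecting all $a$-sets through $i$ in some other way — but if $i\notin B$ then, since $|B|=k$ and $n-k\geq a$, one can choose an $a$-set $A\ni i$ with $A\cap B=\emptyset$, contradicting cross-intersection. This uses exactly $n>a+k$, which is the hypothesis of Hilton's Lemma, so no extra assumption is needed.

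I expect the main (minor) obstacle to be bookkeeping: making sure the stated sum of binomial coefficients exactly matches the count of $a$-sets meeting $[t]$, and handling the boundary case where $a\le t$ or where the relevant disjoint $a$-set fails to exist — but under the ambient assumptions $n>a+k$ and $t\ge 1$ these are routine. There is no deep difficulty here; the corollary is essentially a clean repackaging of Hilton's Lemma once one recognizes the initial lex segment.
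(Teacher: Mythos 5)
Your proof is correct and follows essentially the same route as the paper: reduce to the lexicographic case via Hilton's Lemma, identify the initial segment of size $\binom{n-1}{a-1}+\cdots+\binom{n-t}{a-1}$ as $\left\{A\colon A\cap[t]\neq\emptyset\right\}$, and observe that its transversal family is the $t$-star $\hs_{[t]}$. The paper states the final step without elaboration, whereas you spell out the (easy) argument that each $i\in[t]$ must lie in $B$; this is a harmless expansion, not a different method.
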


\begin{proof}
Note that
\begin{align*}
\hl(n,a,|\ha|)\supset \hl\left(n,a,\binom{n-1}{a-1}+\ldots+\binom{n-t}{a-1}\right)&=\left\{A\in \binom{[n]}{a}\colon A\cap [t]\neq \emptyset\right\}\\[5pt]
&=:\he(n,a,t).
\end{align*}
Since $\hht^{(b)}(\he(n,a,t))$ is the $t$-star $\{B\in\binom{[n]}{b}\colon [t]\subset B\}$, the statement follows.
\end{proof}

\begin{cor}
\begin{align}\label{hiltonLem-2}
\mbox{If } |\ha| > \binom{n-1}{a-1}-\binom{n-b-1}{a-1} \mbox{ then } |\hb|\leq \binom{n-1}{b-1}.
\end{align}
\end{cor}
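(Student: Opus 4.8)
The statement to prove is: if $\ha\subset\binom{[n]}{a}$ and $\hb\subset\binom{[n]}{b}$ are cross-intersecting with $n>a+b$ and $|\ha|>\binom{n-1}{a-1}-\binom{n-b-1}{a-1}$, then $|\hb|\le\binom{n-1}{b-1}$. The plan is to argue entirely in the lexicographic world, exactly as in the proof of \eqref{hiltonLem-1}. By Hilton's Lemma, $\hl(n,a,|\ha|)$ and $\hl(n,b,|\hb|)$ are cross-intersecting, so it suffices to bound $|\hb|$ under the assumption that both families are initial segments in lexicographic order. So assume $\ha=\hl(n,a,|\ha|)$ and $\hb=\hl(n,b,|\hb|)$.

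The key computation is to identify which initial segment of $\binom{[n]}{a}$ the hypothesis $|\ha|>\binom{n-1}{a-1}-\binom{n-b-1}{a-1}$ forces. I claim that
\[
\binom{n-1}{a-1}-\binom{n-b-1}{a-1}=\left|\left\{A\in\binom{[n]}{a}\colon 1\in A,\ A\cap[2,b+1]\neq\emptyset\right\}\right|,
\]
since the sets containing $1$ number $\binom{n-1}{a-1}$, and among these the ones disjoint from $[2,b+1]$ number $\binom{n-b-1}{a-1}$. Moreover this family $\{A\colon 1\in A,\ A\cap[2,b+1]\neq\emptyset\}$ is precisely an initial segment of $\binom{[n]}{a}$: every set containing $1$ and meeting $[2,b+1]$ precedes every set not of this form (a set missing $1$, or a set $\{1\}\cup S$ with $S\subset[b+2,n]$, is preceded by any set $\{1,j\}\cup\cdots$ with $j\le b+1$). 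Hence $|\ha|>\binom{n-1}{a-1}-\binom{n-b-1}{a-1}$ means $\ha$ strictly contains this initial segment, so $\ha$ contains at least one more set in lex order, namely the next set after all of $\{A\colon 1\in A,\ A\cap[2,b+1]\neq\emptyset\}$; that next set is $\{1\}\cup[b+2,a+b]$ (the lex-smallest $a$-set containing $1$ and disjoint from $[2,b+1]$). In particular $\ha$ contains the set $A_0:=\{1\}\cup[b+2,a+b]$ together with all of $\{A\colon 1\in A,\ A\cap[2,b+1]\neq\emptyset\}$.

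Now I read off the constraint on $\hb=\hht^{(b)}(\ha)$. Any $B\in\hb$ must meet every member of $\ha$. Meeting all sets of the form $\{1,j\}\cup(\text{anything})$ for $j\in[2,b+1]$ together with meeting $A_0$: if $1\notin B$, then to meet $\{1,j\}\cup S$ for each $j$ we essentially need $j\in B$ or $B$ hits $S$; taking $S$ ranging over all $(a-2)$-subsets of $[b+2,n]$ and using $n>a+b$ forces, for each fixed $j\in[2,b+1]$, either $j\in B$ or $B\subset[2,b+1]\cup\{1\}$-type obstruction — the cleanest route is: if $1\notin B$ then $B$ must meet $\hs_{\{1\}}\cap\ha\supset\{A\colon 1\in A,\ A\cap[2,b+1]\neq\emptyset\}$, and the transversals of size $b$ of the latter family (an initial segment of size $\binom{n-1}{a-1}-\binom{n-b-1}{a-1}$) that avoid $1$ are exactly those $B$ with $[2,b+1]\subset B$, i.e. $B=[2,b+1]$; but $B=[2,b+1]$ fails to meet $A_0=\{1\}\cup[b+2,a+b]$. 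Therefore every $B\in\hb$ contains $1$, giving $|\hb|\le\binom{n-1}{b-1}$.

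The main obstacle is the middle step: pinning down exactly that $\{A\colon 1\in A,\ A\cap[2,b+1]\neq\emptyset\}$ is an initial segment of the lex order and that its size is $\binom{n-1}{a-1}-\binom{n-b-1}{a-1}$, and then arguing that the one extra set $A_0$ is enough to kill the only non-star transversal $[2,b+1]$. Once that is in place the conclusion is immediate from $|\hht^{(b)}(\ha)|\le|\hs_{\{1\}}|=\binom{n-1}{b-1}$. An alternative, slightly slicker packaging: note $\binom{n-1}{a-1}-\binom{n-b-1}{a-1}+1$ is exactly the number of $a$-sets in the lex order up to and including $A_0$, so the hypothesis says $\ha\supset\hl(n,a,\binom{n-1}{a-1}-\binom{n-b-1}{a-1}+1)$, and one checks directly that $\hht^{(b)}$ of this initial segment is contained in the star $\hs_{\{1\}}$.
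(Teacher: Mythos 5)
Your proof is correct and takes essentially the same approach as the paper: both apply Hilton's Lemma to pass to lexicographic initial segments and then read off the bound from the transversal family $\left\{A\in\binom{[n]}{a}\colon 1\in A,\ A\cap[2,b+1]\neq\emptyset\right\}$ of size $\binom{n-1}{a-1}-\binom{n-b-1}{a-1}$. The paper runs the same computation by contradiction from the $\hb$ side --- assume $|\hb|\geq\binom{n-1}{b-1}+1$, observe $\hl\bigl(n,b,\binom{n-1}{b-1}+1\bigr)=\hs_{\{1\}}\cup\{[2,b+1]\}$ and compute its $a$-transversal family directly --- which is a touch cleaner than your forward argument from $\ha$, since it never needs the extra set $A_0$ nor the case analysis showing $\hht^{(b)}(\ha)\subset\hs_{\{1\}}$.
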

\begin{proof}
Suppose indirectly $|\hb|\geq \binom{n-1}{b-1}+1$. Consider
\[
\hl\left(n,b,\binom{n-1}{b-1}+1\right)=\left\{B\in \binom{[n]}{b}\colon 1\in B\right\}\cup \{[2,b+1]\}.
\]
Noting that
\[
\hht^{(a)}\left(\hl\left(n,b,\binom{n-1}{b-1}+1\right)\right) = \left\{A\in \binom{[n]}{a}\colon 1\in A, A\cap [2,b+1]\neq \emptyset\right\}
\]
has size $\binom{n-1}{a-1}-\binom{n-b-1}{a-1}$ the desired contradiction follows.
\end{proof}

\begin{cor}
\begin{align}\label{hiltonLem-3.1}
\mbox{If } |\ha| > \binom{n-1}{a-1}+\binom{n-2}{a-1}+\binom{n-4}{a-2} \mbox{ then } |\hb|\leq \binom{n-3}{b-3}+\binom{n-4}{k-3}.
\end{align}
\end{cor}
\begin{proof}
Note that
\begin{align*}
\hl(n,a,|\ha|)\supset &\hl\left(n,a,\binom{n-1}{a-1}+\binom{n-2}{a-1}+\binom{n-4}{a-2}\right)\\[5pt]
=&\left\{A\in \binom{[n]}{a}\colon 1\in   A \mbox{ or }2\in   A\mbox{ or }\{3,4\}\subset  A \right\}=:\hd(n,a).
\end{align*}
Then clearly
\[
\hht^{(b)}(\hd(n,a)) =\left\{B\in\binom{[n]}{b}\colon \{1,2,3\}\subset B \mbox{ or } \{1,2,4\}\subset B\right\}
\]
and the statement follows.
\end{proof}

For later use let us prove one more consequence of Hilton's Lemma. We state it in the special case that we need in Section 3.

\begin{lem}
Suppose that $\hf, \hg\subset \binom{[n]}{k}$ are cross-intersecting, $n>2k$. Let
\[
\binom{n-4}{k-4}<|\hf|\leq \binom{n-3}{k-3},
\]
and
\[
\binom{n-1}{k-1}+ \binom{n-2}{k-1}+\binom{n-3}{k-1}<|\hg|\leq \binom{n-1}{k-1}+ \binom{n-2}{k-1}+\binom{n-3}{k-1}+\binom{n-4}{k-1}.
\]
Define
\[
f=|\hf|-\binom{n-4}{k-4}\mbox{ and } g= |\hg| -\binom{n-1}{k-1}- \binom{n-2}{k-1}-\binom{n-3}{k-1}.
\]
 Then
\begin{align}\label{hiltonLem-3}
\frac{g}{\binom{n-4}{k-1}}+\frac{f}{\binom{n-4}{k-3}}\leq 1.
\end{align}
\end{lem}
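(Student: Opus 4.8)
The plan is to apply Hilton's Lemma to replace $\hf$ and $\hg$ by the initial segments $\hf'=\hl(n,k,|\hf|)$ and $\hg'=\hl(n,k,|\hg|)$ in the lexicographic order, which remain cross-intersecting by the lemma. Since $|\hf|\leq\binom{n-3}{k-3}=\left|\left\{F\in\binom{[n]}{k}\colon[3]\subset F\right\}\right|$, every member of $\hf'$ contains $[3]$; writing $\hf'$ in terms of $\hf'(\overline{[3]})=\{F\setminus[3]\colon F\in\hf'\}\subset\binom{[4,n]}{k-3}$, we have $|\hf'(\overline{[3]})|=|\hf|$. Similarly, because $\binom{n-1}{k-1}+\binom{n-2}{k-1}+\binom{n-3}{k-1}$ counts those $k$-sets meeting $[3]$, the initial segment $\hg'$ consists of all $k$-sets meeting $[3]$ together with the first $g$ of the remaining $k$-sets, i.e.\ the first $g$ sets in $\binom{[4,n]}{k}$ in lexicographic order on $[4,n]$; call this last collection $\hg''\subset\binom{[4,n]}{k}$, so $|\hg''|=g$.

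Next I would extract the genuine cross-intersecting condition. Take $F\in\hf'$ and $G\in\hg'$. If $G$ meets $[3]$ there is nothing to check; the binding constraint is that every $F\in\hf'$ must meet every $G\in\hg''$, where now both families live on the ground set $[4,n]$, of size $n-3$. In other words, $\hf'(\overline{[3]})\subset\binom{[4,n]}{k-3}$ and $\hg''\subset\binom{[4,n]}{k}$ are cross-intersecting on $[4,n]$. Moreover, both are themselves initial segments of the lexicographic order on $[4,n]$ (inheriting this from being initial segments on $[n]$ after deleting the common prefix behaviour on $[3]$), so Hilton's Lemma is tight here and we may reason directly with these lex-initial families.

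Now I would identify the extremal shape on $[4,n]$. With $n-3$ as the new ground-set size, $(k-3)$-sets and $k$-sets, the relevant quantities are $\binom{(n-3)-1}{(k-3)-1}=\binom{n-4}{k-4}$ — but $f=|\hf|-\binom{n-4}{k-4}$ is exactly the overflow of $\hf'(\overline{[3]})$ past the size of the full star on one point of $[4,n]$ — so $\hf'(\overline{[3]})$ consists of all $(k-3)$-sets through the point $4$ together with $f$ further sets, which (being lex-initial) are exactly the first $f$ members of $\binom{[5,n]}{k-3}$. Dually, since $\hg''$ is cross-intersecting with a family that already contains every $(k-3)$-subset through $4$, every $G\in\hg''$ must contain $4$; and then $G\setminus\{4\}$ must meet each of the extra $f$ sets, i.e.\ $\hg''(\overline{\{4\}})=\emptyset$ unless $f=0$, while $|\hg''(4)|=g$. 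Comparing against Corollary~\eqref{hiltonLem-2} applied on the ground set $[5,n]$ (size $n-4$) with the $f$ extra $(k-3)$-sets of $\hf$ playing the role of $\ha$ and the $g$ sets $\{G\setminus\{4\}\colon G\in\hg''\}\subset\binom{[5,n]}{k-1}$ playing the role of $\hb$: cross-intersection forces, by Hilton's Lemma once more, that if $g>0$ then $f\leq\binom{n-4}{k-3}-\binom{(n-4)-(k-1)-1}{k-3}$ type bounds hold, and more precisely the lex-initial structure yields the linear trade-off $g\cdot\binom{n-4}{k-3}^{-1}+f\cdot$ (something) but I will instead argue it cleanly as follows.

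The cleanest route: on $[4,n]$ we have lex-initial cross-intersecting $\hf'(\overline{[3]})$ and $\hg''$; since $\hf'(\overline{[3]})\supset\{S\in\binom{[4,n]}{k-3}\colon 4\in S\}$, every $G\in\hg''$ contains $4$, so $\hg''=\{4\cup H\colon H\in\hh\}$ with $\hh\subset\binom{[5,n]}{k-1}$, $|\hh|=g$, lex-initial; and the $f$ extra members of $\hf'(\overline{[3]})$ beyond the $4$-star are $\{4'\cup S'\colon\ldots\}$... more carefully, the members of $\hf'(\overline{[3]})$ not containing $4$ form a lex-initial family $\hk\subset\binom{[5,n]}{k-3}$ with $|\hk|=f$, and $\hk,\hh$ are cross-intersecting on $[5,n]$ (size $n-4$). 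So I need: for lex-initial cross-intersecting $\hk\subset\binom{[5,n]}{k-3}$, $\hh\subset\binom{[5,n]}{k-1}$ with $|\hk|=f\le\binom{n-4}{k-3}$ and $|\hh|=g$, one has $g/\binom{n-4}{k-1}+f/\binom{n-4}{k-3}\le 1$. This is the crux: I expect to prove it by the standard "shift a lex-initial family is a down-set under taking the co-lex complement" computation, or directly: parametrize by whether $5\in$ the sets. If $\hk$ is contained in the star of $5$ then $f\le\binom{n-5}{k-4}$ and we can bound $g\le\binom{n-4}{k-1}$ trivially — but that is not enough, so instead track both simultaneously. The honest main obstacle is exactly verifying this final bilinear inequality \eqref{hiltonLem-3}; I would handle it by induction on $n-k$ (or on the ground-set size), splitting each lex-initial family according to membership of its least point: if neither family uses that point we recurse on the smaller ground set; if $\hk$ fills the full star on the point, then $\hh$ must avoid it entirely and a base computation finishes it; the middle case is controlled because lex-initiality forces the "full prefix then remainder" shape, reducing to a one-variable convexity estimate. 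Everything else is bookkeeping with the identities $\binom{n-1}{k-1}+\binom{n-2}{k-1}+\binom{n-3}{k-1}=\binom{n}{k}-\binom{n-3}{k}$ and $\binom{n-3}{k-3}=\binom{n-4}{k-4}+\binom{n-4}{k-3}$, used to match the stated thresholds.
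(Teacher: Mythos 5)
Your reduction is essentially the same as the paper's: after applying Hilton's Lemma to pass to lexicographic initial segments, you identify a cross-intersecting pair $\hk\subset\binom{[5,n]}{k-3}$ and $\hh\subset\binom{[5,n]}{k-1}$ with $|\hk|=f$ and $|\hh|=g$ (the paper calls these $\hf_0$ and $\hg_0$), and you correctly check that cross-intersection on $[n]$ collapses to cross-intersection of these two families on $[5,n]$. This part is sound.

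The problem is that the entire remaining content of the lemma --- the inequality $f/\binom{n-4}{k-3}+g/\binom{n-4}{k-1}\le 1$ for cross-intersecting $\hk,\hh$ on a ground set of size $n-4$ --- is exactly what you label ``the crux'' and ``the honest main obstacle'', and you do not prove it. You sketch an induction on the ground-set size by splitting on membership of the least point, but the sketch does not track all three nontrivial cross-intersection constraints that arise after splitting (between $\hk(\bar 5)$ and $\hh(\bar 5)$, between $\hk(5)$ and $\hh(\bar 5)$, and between $\hk(\bar 5)$ and $\hh(5)$), and the phrase ``reducing to a one-variable convexity estimate'' is a placeholder rather than an argument. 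The paper closes this gap with a short double-counting argument that requires no lex-initiality at all: form the bipartite graph between $\binom{[5,n]}{k-3}$ and $\binom{[5,n]}{k-1}$ whose edges are disjoint pairs. This graph is bi-regular (each $(k-3)$-set has $\binom{n-4-(k-3)}{k-1}$ neighbours, each $(k-1)$-set has $\binom{n-4-(k-1)}{k-3}$), so counting edges incident to $\hk$ gives $|\hn(\hk)|/\binom{n-4}{k-1}\ge |\hk|/\binom{n-4}{k-3}$, and since cross-intersection means $\hn(\hk)\cap\hh=\emptyset$, hence $|\hn(\hk)|+|\hh|\le\binom{n-4}{k-1}$, the inequality follows immediately. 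You should replace the induction sketch with this normalized-matching step (or prove the induction in full); as written the proposal identifies the right reduction but leaves the key inequality unproved.
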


\begin{proof}
Note that all
\[
F\in \hl(n,k,|\hf|)\setminus  \hl\left(n,k,\binom{n-4}{k-4}\right)
\]
satisfy $F\cap [4]=[3]$. Let $\hf_0$ be the family of size $f$ formed by the corresponding sets $F\setminus [4]\in \binom{[5,n]}{k-3}$.

Also, for all
\[
G\in \hl(n,k,|\hg|)\setminus  \hl\left(n,k,\binom{n-1}{k-1}+\binom{n-2}{k-1}+\binom{n-3}{k-1}\right),
\]
$G\cap [4]=\{4\}$ follows from $g\leq \binom{n-4}{k-1}$ (a consequence of \eqref{hiltonLem-1}). Let $\hg_0\subset \binom{[5,n]}{k-1}$ be formed by the corresponding sets $G\setminus [4]$. Obviously, $\hf_0$ and $\hg_0$ are cross-intersecting. The inequality \eqref{hiltonLem-3} is essentially due to Sperner \cite{Sperner} but let us repeat the simple argument. Define a bipartite graph with partite sets $\binom{[5,n]}{k-3}$ and $\binom{[5,n]}{k-1}$ by  putting an edge  between $F\in\binom{[5,n]}{k-3}$ and $G\in \binom{[5,n]}{k-1}$ iff $F\cap G=\emptyset$. This bipartite graph is bi-regular implying that the neighborhood $\hn(\hf_0)$ satisfies
\[
|\hn(\hf_0)|/\binom{n-4}{k-1}\geq |\hf_0|/\binom{n-4}{k-3}.
\]
Since $\hf_0\cup \hg_0$ is an independent set, $\hn(\hf_0)\cap \hg_0 =\emptyset$ implying
\[
\frac{|\hg_0|}{\binom{n-4}{k-1}}+\frac{|\hf_0|}{\binom{n-4}{k-3}}\leq 1.
\]
\end{proof}

\section{Some size restrictions for the general case and the proof for shifted-resistant families}

Throughout the proof we assume that $\hf,\hg\subset \binom{[n]}{k}$ are non-trivial, cross-intersecting and
\begin{align}\label{indirectAssum}
|\hf||\hg| \geq h(n,k)^2.
\end{align}

\begin{prop}
For $n\geq 2k-1$, $k\geq 3$
\begin{align}\label{ineq-f1}
\left(\binom{n-3}{k-3}+\binom{n-4}{k-3}\right)&
\left(\binom{n-1}{k-1}+\binom{n-2}{k-1}+\binom{n-3}{k-1}\right)\nonumber\\[5pt]
&\qquad<\left(\binom{n-2}{k-2}+\binom{n-3}{k-2}+\binom{n-4}{k-2}\right)^2.
\end{align}
\end{prop}

\begin{proof}
Note that
\[
\binom{n-1}{k-1}+\binom{n-2}{k-1}+\binom{n-3}{k-1} \leq \frac{n-1}{k-1}\left(\binom{n-2}{k-2}+\binom{n-3}{k-2}+\binom{n-4}{k-2}\right).
\]
Since $n\geq 2k-1$ implies $\frac{n-1}{k-1}\leq \frac{n-3}{k-2}<\frac{n-2}{k-2}$, we infer
\begin{align*}
\frac{n-1}{k-1}\left(\binom{n-3}{k-3}+\binom{n-4}{k-3}\right)&\leq \frac{n-2}{k-2}\binom{n-3}{k-3}+\frac{n-3}{k-2}\binom{n-4}{k-3}\leq \binom{n-2}{k-2}+ \binom{n-3}{k-2}.
\end{align*}
Thus \eqref{ineq-f1} follows.
\end{proof}

\begin{prop}
In proving Theorem \ref{main} we may assume that
\begin{align}
&\min \left\{|\hf|,|\hg|\right\}> \binom{n-3}{k-3}+\binom{n-4}{k-3},\label{ineq-1}\\[5pt]
&\max \left\{|\hf|,|\hg|\right\}\leq \binom{n-1}{k-1}+\binom{n-2}{k-1}+\binom{n-4}{k-2}.\label{ineq-2}
\end{align}
\end{prop}
\begin{proof}
By symmetry assume that $|\hf|\leq |\hg|$. If $|\hf|< \binom{n-4}{k-4}$, then applying \eqref{ineq-key2} twice we obtain
\[
|\hf||\hg|<\binom{n-4}{k-4}\binom{n}{k} <\binom{n-2}{k-2}^2 <h(n,k)^2.
\]
If $|\hg|> \binom{n}{k}-\binom{n-4}{k}$
 then by \eqref{hiltonLem-2} we have $|\hf|< \binom{n-4}{k-4}$. Thus we may assume that $|\hf|\geq \binom{n-4}{k-4}$ and $|\hg|\leq \binom{n}{k}-\binom{n-4}{k}$.

Let
\[
|\hf|=\binom{n-4}{k-4}+\alpha\binom{n-4}{k-3},\ |\hg| =\binom{n-1}{k-1}+\binom{n-2}{k-1}+\binom{n-3}{k-1}+\beta\binom{n-4}{k-1}.
\]
Note that  \eqref{hiltonLem-3} implies  $\alpha+\beta\leq 1$ and thereby
\[
|\hf| \leq \binom{n-3}{k-3}-\beta \binom{n-4}{k-3}.
\]
Since $\frac{n-k}{n-3}>\frac{n-k-2}{n-3}$ implies
\[
\frac{\binom{n-4}{k-3}}{\binom{n-3}{k-3}} >\frac{\binom{n-4}{k-1}}{\binom{n-3}{k-1}}>\frac{\binom{n-4}{k-1}}{\binom{n-1}{k-1}+\binom{n-2}{k-1}+\binom{n-3}{k-1}},
\]
it follows that
\begin{align}
|\hf||\hg| &\leq\left(\binom{n-3}{k-3}-\beta \binom{n-4}{k-3}\right)\left(\binom{n-1}{k-1}+\binom{n-2}{k-1}+\binom{n-3}{k-1}+\beta\binom{n-4}{k-1}\right)\nonumber\\[5pt]
&\leq \binom{n-3}{k-3}\left(\binom{n-1}{k-1}+\binom{n-2}{k-1}+\binom{n-3}{k-1}\right).\label{ineq-hfhgab}
\end{align}
By \eqref{ineq-key2} we know
\[
\binom{n-3}{k-3}\binom{n-1}{k-1}<\binom{n-2}{k-2}^2.
\]
Moreover,
\begin{align*}
\binom{n-3}{k-3}\binom{n-2}{k-1} &\leq \binom{n-2}{k-2}\binom{n-3}{k-2} \frac{k-2}{n-2}\frac{n-2}{k-1} < \binom{n-2}{k-2}\binom{n-3}{k-2},\\[5pt]
\binom{n-3}{k-3}\binom{n-3}{k-1} &\leq \binom{n-2}{k-2}\binom{n-4}{k-2} \frac{k-2}{n-2}\frac{n-3}{k-1} < \binom{n-2}{k-2}\binom{n-4}{k-2}.
\end{align*}
Thus from \eqref{ineq-hfhgab} and $k\geq 3$ we obtain
\begin{align*}
|\hf||\hg| \leq \binom{n-2}{k-2}\left(\binom{n-2}{k-2}+\binom{n-3}{k-2}+\binom{n-4}{k-2}\right)<h(n,k)^2,
\end{align*}
contradicting \eqref{indirectAssum}. Thus  we may assume that
\[
|\hf|\geq \binom{n-3}{k-3} \mbox{ and } |\hg|\leq \binom{n-1}{k-1}+\binom{n-2}{k-1}+\binom{n-3}{k-1}.
\]

If $|\hf|\leq \binom{n-3}{k-3}+\binom{k-4}{k-3}$, then by \eqref{ineq-f1}
\[
|\hf||\hg|\leq \left(\binom{n-2}{k-2}+\binom{n-3}{k-2}+\binom{n-4}{k-2}\right)^2< h(n,k)^2,
\]
contradicting \eqref{indirectAssum} again. Thus we may further assume $|\hf| > \binom{n-3}{k-3}+\binom{n-4}{k-3}$. Then by \eqref{hiltonLem-3.1} it implies that
\[
|\hg|\leq \binom{n-1}{k-1}+\binom{n-2}{k-1}+\binom{n-4}{k-2}.
\]
\end{proof}

We need the following computational bound to estimate the size of $\hf$ and $\hg$ below.
\begin{lem}
For $n\geq 4k$ and $k\geq 8$,
\begin{align}
h(n,k)&>\frac{32}{9}\binom{n-2}{k-2}>3\binom{n-2}{k-2},\label{ineq-hmn-2k-22}\\[5pt]
h(n,k)&> \frac{9}{4}\binom{n-2}{k-2}+\frac{9}{4}\binom{n-4}{k-2}.\label{ineq-hmn-2k-2}
\end{align}
\end{lem}
\begin{proof}
Since $n\geq (j-1)k/2$ implies $\frac{n-k-j+3}{n-j+1}\geq \frac{n-k}{n}$, by \eqref{ineq-key} and $n\geq 4k$ we infer
\[
\frac{\binom{n-j}{k-2}}{\binom{n-2}{k-2}}\geq \left(\frac{n-k-j+3}{n-j+1}\right)^{j-2} \geq  \left(\frac{n-k}{n}\right)^{j-2}\geq \left(\frac{3}{4}\right)^{j-2},\  2\leq j\leq 9.
\]
It follows that
\begin{align*}
\sum_{j=2}^9 \binom{n-j}{k-2}&\geq  \binom{n-2}{k-2}\sum_{j=2}^9 \left(\frac{3}{4}\right)^{j-2}=\frac{1-\left(\frac{3}{4}\right)^8}{1-\frac{3}{4}}
\binom{n-2}{k-2}> \frac{32}{9}\binom{n-2}{k-2}.
\end{align*}
Thus by $k\geq 8$ we have
\[
h(n,k)>\binom{n-1}{k-1}-\binom{n-k-1}{k-1}\geq \sum_{j=2}^9 \binom{n-j}{k-2}> \frac{32}{9}\binom{n-2}{k-2}.
\]

By \eqref{ineq-key} and $n\geq 4k$ we also have
\begin{align}\label{ineq-3}
\sum_{j=2}^4 \binom{n-j}{k-2}&\geq  \binom{n-2}{k-2} \left(1+\frac{3}{4}+\left(\frac{3}{4}\right)^2\right)>\frac{9}{4}\binom{n-2}{k-2}.
\end{align}
Moreover,
\[
\frac{\binom{n-j}{k-2}}{\binom{n-4}{k-2}}\geq \left(\frac{n-k-j+3}{n-j+1}\right)^{j-4} \geq  \left(\frac{n-k}{n}\right)^{j-4}\geq \left(\frac{3}{4}\right)^{j-4},\ 5\leq j\leq 9.
\]
We infer
\begin{align}\label{ineq-4}
\sum_{j=5}^9 \binom{n-j}{k-2}&\geq \binom{n-4}{k-2}\left(\frac{3}{4}+\left(\frac{3}{4}\right)^2+\left(\frac{3}{4}\right)^3
+\left(\frac{3}{4}\right)^4+\left(\frac{3}{4}\right)^5\right)> \frac{9}{4}\binom{n-4}{k-2}.
\end{align}
Adding \eqref{ineq-3} and \eqref{ineq-4} we obtain \eqref{ineq-hmn-2k-2}.
\end{proof}

To prove the theorem we apply shifting, a powerful method that can be traced back to \cite{EKR}. For $\hf\subset{[n]\choose k}$ and $1\leq i< j\leq n$, define the shift
$$S_{ij}(\hf)=\left\{S_{ij}(F)\colon F\in\hf\right\},$$
where
$$S_{ij}(F)=\left\{
                \begin{array}{ll}
                  (F\setminus\{j\})\cup\{i\}, & j\in F, i\notin F \text{ and } (F\setminus\{j\})\cup\{i\}\notin \hf; \\[5pt]
                  F, & \hbox{otherwise.}
                \end{array}
              \right.
$$
It is well known (cf. \cite{F87}) that shifting preserves the  cross-intersecting property.

Let us define the {\it shifting partial order} $\prec$. For two $k$-sets $A$ and $B$ where $A=\{a_1,\ldots,a_k\}$, $a_1<\ldots<a_k$ and $B=\{b_1,\ldots,b_k\}$, $b_1<\ldots<b_k$ we say that $A$ precedes $B$ and denote it by $A\prec B$ if $a_i\leq b_i$ for all $1\leq i\leq k$.

A family $\hf\subset \binom{[n]}{k}$ is called {\it shifted} (or {\it initial}) if $A\prec B$ and $B\in \hf$ always imply $A\in \hf$. By repeated shifting one can transform an arbitrary $k$-graph into a shifted $k$-graph with the same number of edges.

The only problem with shifting is that it might destroy the non-triviality of $\hf$ or $\hg$ or both.

\begin{fact}\label{fact-3.1}
If $S_{ij}(\hf)\subset \hs_{\{i\}}$,
then
\begin{itemize}
  \item[(i)] $\hf(\overline{ij})=\emptyset$ and
  \item[(ii)] $\hf(i)\cap \hf(j)=\emptyset$. (Note that this is equivalent with $\hf(\overline{i}j)\cap \hf(i\overline{j})=\emptyset$).
\end{itemize}
\end{fact}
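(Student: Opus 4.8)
The goal is to prove the elementary Fact about $S_{ij}(\hf)\subset\hs_{\{i\}}$: that (i) every edge of $\hf$ meets $\{i,j\}$, and (ii) no edge of $\hf$ contains both $i$ and $j$ (equivalently $\hf(\bar ij)\cap\hf(i\bar j)=\emptyset$). The plan is a direct case analysis on how a set $F\in\hf$ behaves under $S_{ij}$, using only the definition of $S_{ij}(F)$ and the hypothesis that after shifting every surviving set contains $i$.

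For (i), I would suppose for contradiction that some $F\in\hf$ satisfies $F\cap\{i,j\}=\emptyset$. Then $j\notin F$, so the defining rule gives $S_{ij}(F)=F$ (the first branch requires $j\in F$). Hence $F\in S_{ij}(\hf)\subset\hs_{\{i\}}$, so $i\in F$, contradicting $F\cap\{i,j\}=\emptyset$. Therefore $F\cap\{i,j\}\neq\emptyset$ for every $F\in\hf$, which is exactly $\hf(\overline{ij})=\emptyset$.

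For (ii), suppose some $F\in\hf$ has $\{i,j\}\subset F$. Since $i\in F$ already, the first branch of the definition cannot apply (it requires $i\notin F$), so $S_{ij}(F)=F$, and $F$ survives in $S_{ij}(\hf)$. Consider the set $F'=(F\setminus\{j\})\cup\{i\}=F\setminus\{j\}$... no — rather, consider $G:=(F\setminus\{j\})\cup\{i'\}$; the cleaner route is: let $F\in\hf$ with $\{i,j\}\subset F$ and also let $F''\in\hf$ with $j\in F''$, $i\notin F''$. If additionally $(F''\setminus\{j\})\cup\{i\}=F\setminus\{j\}\cup\{i\}$, i.e. $F''\setminus\{j\}=F\setminus\{j\}$, then since $F=F''\cup\{i\}\in\hf$, the first branch is blocked for $F''$ by the clause "$(F\setminus\{j\})\cup\{i\}\notin\hf$", so $S_{ij}(F'')=F''\notin\hs_{\{i\}}$, contradiction. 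Concretely this says: if $H\in\hf(i)\cap\hf(j)$, then both $H\cup\{i\}$ and $H\cup\{j\}$ lie in $\hf$ (with $j\notin H\cup\{i\}$ needing a moment's care — one chooses $H\subset[n]\setminus\{i,j\}$ representing the common element, which is the content of writing $\hf(i)\cap\hf(j)$ as sets not containing $i$ or $j$), and then the set $H\cup\{j\}$ has $j\in H\cup\{j\}$, $i\notin H\cup\{j\}$, and $(H\cup\{j\}\setminus\{j\})\cup\{i\}=H\cup\{i\}\in\hf$, so $S_{ij}(H\cup\{j\})=H\cup\{j\}\notin\hs_{\{i\}}$, contradicting $S_{ij}(\hf)\subset\hs_{\{i\}}$. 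Hence $\hf(i)\cap\hf(j)=\emptyset$, and unpacking the definitions this is the same as $\hf(\overline{i}j)\cap\hf(i\overline{j})=\emptyset$.

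The only mildly delicate point — and the one I would be most careful about — is bookkeeping the convention for $\hf(i)$, $\hf(j)$, and the traces $\hf(\bar i j)$, $\hf(i\bar j)$: one must fix whether a representative in $\hf(i)$ is allowed to contain $j$, and then check that the equivalence "$\hf(i)\cap\hf(j)=\emptyset \iff \hf(\overline i j)\cap\hf(i\overline j)=\emptyset$" is just a relabelling (remove $\{i,j\}$ from the ground set). No real obstacle beyond that; the argument is purely definitional and requires no counting or the bounds \eqref{ineq-1}, \eqref{ineq-2}.
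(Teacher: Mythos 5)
The paper states this Fact without proof, treating it as an immediate consequence of the definition of $S_{ij}$, so there is nothing in the paper to compare against; the question is only whether your argument is correct, and it is. Part (i) is exactly right: if $F\cap\{i,j\}=\emptyset$ then $j\notin F$ forces $S_{ij}(F)=F$, so $F\in S_{ij}(\hf)\subset\hs_{\{i\}}$ gives the contradiction $i\in F$. For part (ii) the clean version at the end is the intended reasoning: if $H\in\hf(i)\cap\hf(j)$ then $H$ avoids both $i$ and $j$ (membership in $\hf(i)$ forbids $i$, membership in $\hf(j)$ forbids $j$), so $H\cup\{i\}\in\hf$ and $H\cup\{j\}\in\hf$; then for $G:=H\cup\{j\}$ the first branch of $S_{ij}$ is blocked because $(G\setminus\{j\})\cup\{i\}=H\cup\{i\}\in\hf$, hence $S_{ij}(G)=G\notin\hs_{\{i\}}$, a contradiction. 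One presentational remark: the earlier passage beginning ``suppose some $F\in\hf$ has $\{i,j\}\subset F$'' is a false start — that hypothesis is neither what (ii) asserts nor what is needed to prove it, and the two sentences that follow (introducing $F''$ with $F''\setminus\{j\}=F\setminus\{j\}$) are vacuous since those conditions force $F''=F$, contradicting $i\notin F''$. You correctly abandon that route, but the final write-up should simply start from $H\in\hf(i)\cap\hf(j)$ as you eventually do.
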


%
%

%

With all this preparation we are ready to state and prove the main result of this section.

\begin{prop}\label{lem-2.4}
Let $n\geq  4k$, $k\geq 8$ and $\hf,\hg\subset \binom{[n]}{k}$ be non-trivial cross-intersecting. Suppose that there exist disjoint pairs $(a,b), (c,d)$ such that $S_{ab}(\hf)\subset \hs_{a}$ and  $S_{cd}(\hf)\subset \hs_{c}$. Then
\begin{align}\label{ineq-main}
|\hf||\hg|< h(n,k)^2.
\end{align}
\end{prop}

\begin{proof}
For notational convenience assume that $(a,b)=(1,2)$ and $(c,d)=(3,4)$.
 Arguing indirectly we assume \eqref{indirectAssum}. For $i=2,3,4$, define
\[
\hf_i = \{F\setminus [4]\colon F\in \hf,\ |F\cap [4]|=i\}.
\]
Since $S_{ij}(\hf)\subset \hs_{\{i\}}$ for $(i,j)=(1,2)$ and $(3,4)$ implies $(1,2),(3,4)\in \hht^{(2)}(\hf)$, we have
\begin{align}\label{ineq-fcap5n}
|F\cap [5,n]|\leq k-2 \mbox{ for all }F \in \hf.
\end{align}

\begin{claim}\label{claim-1}
For every $E\in \hf_2$, there are at most two choices for $S\in \binom{[4]}{2}$ with $E\cup S\in \hf$. For every $T\in \hf_3$, there are at most two choices for $R\in \binom{[4]}{3}$ with $T\cup R\in \hf$.
\end{claim}
\begin{proof}
Suppose that for some $E\in \hf_2$ there  are three choices for $S\in \binom{[4]}{2}$ with $E\cup S\in \hf$.  Note that Fact \ref{fact-3.1} (i) implies $S\neq (1,2)$ and $S\neq (3,4)$. Choose $S_1,S_2$ such that $S_1\cap S_2\neq \emptyset$. Without loss of generality, assume that $S_1=(1,3)$ and $S_2=(1,4)$, then $E\cup \{1\}  \in \hf(3)\cap \hf(4)$, contradicting Fact \ref{fact-3.1} (ii).

Similarly, suppose that for some  $T\in \hf_3$ there are three choices for $R\in \binom{[4]}{3}$ with $T\cup R\in \hf$. Then we may choose $R_1,R_2$ such that $R_1\cap R_2= (1,2)$ or $(3,4)$. Without loss of generality, assume that $R_1=(1,2,3)$ and $R_2=(1,2,4)$, then $T\cup \{1,2\}  \in \hf(3)\cap \hf(4)$, contradicting Fact \ref{fact-3.1} (ii).
\end{proof}

By Claim \ref{claim-1} we have
\begin{align}\label{ineq-hfhp}
|\hf| \leq 2|\hf_2|+2|\hf_3|+|\hf_4|.
\end{align}
It follows that
\begin{align}\label{ineq-hfupbound}
|\hf| &\leq  2\binom{n-4}{k-2}+2\binom{n-4}{k-3}+\binom{n-4}{k-4}=\binom{n-2}{k-2}+\binom{n-4}{k-2}.
\end{align}
By \eqref{ineq-hmn-2k-2}, for $n\geq 4k$, $k\geq 8$ we have $|\hf| < \frac{4}{9}h(n,k)$. Then the indirect assumption \eqref{indirectAssum} implies
\begin{align}\label{ineq-2.5hnk}
|\hg| >\frac{9}{4}h(n,k).
\end{align}

Let
\[
\hp=\left\{P\in \binom{[4]}{2}\colon P\cap (1,2)\neq \emptyset, P\cap (3,4)\neq \emptyset\right\}.
\]

\begin{claim}\label{claim-new3.0}
\begin{align}\label{ineq-new3.0}
\sum_{P\in \hp}|\hf(P,[4])|> 4\binom{n-5}{k-3}.
\end{align}
\end{claim}

\begin{proof}
Suppose for contradiction that $\sum\limits_{P\in \hp}|\hf(P,[4])|\leq 4\binom{n-5}{k-3}$. Note that Fact \ref{fact-3.1} (i) implies $|\hf(\{1,2\},[4])|=|\hf(\{3,4\},[4])|=0$.  Then by Claim \ref{claim-1} we have
\begin{align*}
|\hf|&\leq \sum_{P\in \hp}|\hf(P,[4])|+2|\hf_3|+|\hf_4|\\[5pt]
 &\leq 4\binom{n-5}{k-3}+2\binom{n-4}{k-3}+ \binom{n-4}{k-4}\\[5pt]
 &=4\binom{n-5}{k-3}+\binom{n-4}{k-3}+\binom{n-3}{k-3}\\[5pt]
 &< 3\binom{n-3}{k-3}+3\binom{n-5}{k-3}.
\end{align*}
By \eqref{ineq-hmn-2k-2} it follows that
\[
|\hf| \leq 3\left(\frac{k-2}{n-2}\binom{n-2}{k-2}+\frac{k-2}{n-4}\binom{n-4}{k-2}\right)< \frac{4(k-1)}{3(n-1)} h(n,k).
\]
By \eqref{indirectAssum} and \eqref{ineq-hmn-2k-22},
\begin{align}\label{ineq-sumtotal}
|\hg| >\frac{3(n-1)}{4(k-1)} h(n,k)>\frac{3(n-1)}{4(k-1)} 3\binom{n-2}{k-2}>2\binom{n-1}{k-1},
\end{align}
contradicting \eqref{ineq-2}.
\end{proof}

\begin{claim}
\begin{align}\label{ineq-hg4barub}
|\hg(\overline{[4]})|\leq \binom{n-7}{k-3}+\binom{n-8}{k-3}.
\end{align}
\end{claim}
\begin{proof}
By Claim \ref{claim-1} and \eqref{ineq-new3.0}, we infer
\[
2|\hf_2| \geq \sum_{P\in \hp}|\hf(P,[4])|> 4\binom{n-5}{k-3}.
\]
It follows that $|\hf_2|>2\binom{n-5}{k-3}> \binom{n-5}{k-3}+\binom{n-6}{k-3}+\binom{n-8}{k-4}$. Since $\hf_2$, $\hg(\overline{[4]})$ are cross-intersecting, by \eqref{hiltonLem-3.1} we obtain \eqref{ineq-hg4barub}.
\end{proof}

\begin{claim}
For $i\in [4]$,
\begin{align}\label{ineq-newhgi}
|\hg(i,[4])|\leq \binom{n-4}{k-1} - \binom{n-2-k}{k-1}.
\end{align}
\end{claim}

\begin{proof}
Since $\hf$ is non-trivial, there exists $F\in \hf$ such that $i\notin F$. Let $E=F\cap [5,n]$. By \eqref{ineq-fcap5n}, $|E|\leq k-2$. Now the cross-intersection implies $E\cap E'\neq \emptyset$ for each $E' \in \hg(i,[4])$. Thus the claim follows.
\end{proof}

\begin{claim}\label{claim-new3}
At most five of $\hg(P,[4])$, $P\in \binom{[4]}{2}$ have size greater than $\binom{n-5}{k-3}-\binom{n-3-k}{k-3}$.
\end{claim}

\begin{proof}
Suppose for contradiction that $|\hg(P,[4])|>\binom{n-5}{k-3}-\binom{n-3-k}{k-3}$ for all $P\in \binom{[4]}{2}$. Then for each $P\in \binom{[4]}{2}$ let $P'=[4]\setminus P$. Apply \eqref{hiltonLem-2} with $a=b=k-2$ to the cross-intersecting families $\hg(P,[4])$ and $\hf(P',[4])$, we infer $|\hf(P',[4])|\leq \binom{n-5}{k-3}$.
Then
\[
\sum_{P\in \hp}|\hf(P,[4])|\leq 4\binom{n-5}{k-3},
\]
contradicting \eqref{ineq-new3.0}.

\end{proof}

By Claim \ref{claim-new3}, we infer that
\begin{align}\label{ineq-claim2p2}
\sum_{P\subset[4], |P|\geq 2} |\hg(P,[4])|&\leq 5\binom{n-4}{k-2}+\binom{n-5}{k-3}-\binom{n-3-k}{k-3}+4\binom{n-4}{k-3}+\binom{n-4}{k-4}\nonumber\\[5pt]
&<2 \binom{n-4}{k-2}+3\left(\binom{n-4}{k-2}+\binom{n-4}{k-3}\right)+\left(\binom{n-4}{k-3} +\binom{n-4}{k-4}\right)\nonumber\\[5pt]
&\qquad+\binom{n-5}{k-3}\nonumber\\[5pt]
&=2 \binom{n-4}{k-2}+3\binom{n-3}{k-2}+\binom{n-3}{k-3}+\binom{n-5}{k-3}\nonumber\\[5pt]
&=2 \binom{n-4}{k-2}+2\binom{n-3}{k-2}+\binom{n-2}{k-2}+\binom{n-5}{k-3}\nonumber\\[5pt]
&<2\binom{n-2}{k-2}+2\binom{n-3}{k-2} +\binom{n-4}{k-2}.
\end{align}

\begin{claim}\label{claim-2}
Exactly two of $\hg(1,[4])$, $\hg(2,[4])$, $\hg(3,[4])$, $\hg(4,[4])$ have size greater than $\binom{n-5}{k-2}-\binom{n-3-k}{k-2}$.
\end{claim}

\begin{proof}
Suppose that at least three of $\hg(1,[4])$, $\hg(2,[4])$, $\hg(3,[4])$, $\hg(4,[4])$ have size greater than $\binom{n-5}{k-3}-\binom{n-3-k}{k-3}$. Then for each $P\in \hp$ there exists $i\in [4]$ such that $i\notin P$ and $|\hg(i,[4])|> \binom{n-5}{k-2}-\binom{n-3-k}{k-2}$. Apply \eqref{hiltonLem-2} to $\hg(i,[4])$ and $\hf(P,[4])$ with $a=k-1$ and $b=k-2$, we infer $|\hf(P,[4])|\leq \binom{n-5}{k-3}$ for all $P\in \hp$, contradicting \eqref{ineq-new3.0}. Thus, at most two of $\hg(1,[4])$, $\hg(2,[4])$, $\hg(3,[4])$, $\hg(4,[4])$ have size greater than $\binom{n-5}{k-3}-\binom{n-3-k}{k-3}$.

Suppose that at most one of them has size greater than $\binom{n-5}{k-2}-\binom{n-3-k}{k-2}$. Then by \eqref{ineq-newhgi} we have
\begin{align*}
\sum_{ 1\leq  i\leq 4} |\hg(i,[4])|\leq \binom{n-4}{k-1} - \binom{n-2-k}{k-1}+3\left(\binom{n-5}{k-2}-\binom{n-3-k}{k-2}\right).
\end{align*}
Using
\[
\binom{n-5}{k-2}-\binom{n-3-k}{k-2}\leq\binom{n-4}{k-2}-\binom{n-2-k}{k-2},
\]
we get
\begin{align}\label{ineq-claim2p1}
\sum_{ 1\leq  i\leq 4} |\hg(i,[4])|&\leq \binom{n-4}{k-1} - \binom{n-2-k}{k-1}+\binom{n-4}{k-2} - \binom{n-2-k}{k-2}\nonumber\\[5pt]
&\qquad+2\left(\binom{n-5}{k-2}-\binom{n-3-k}{k-2}\right)\nonumber\\[5pt]
&< \binom{n-3}{k-1} - \binom{n-1-k}{k-1}+2\binom{n-5}{k-2}\nonumber\\[5pt]
&< h(n,k)- \binom{n-2}{k-2}- \binom{n-3}{k-2} +2\binom{n-5}{k-2}.
\end{align}
Adding \eqref{ineq-hg4barub}, \eqref{ineq-claim2p2} and \eqref{ineq-claim2p1},
\begin{align*}
|\hg|=&\sum_{\emptyset\neq P\subset[4]} |\hg(P,[4])|+|\hg(\overline{[4]})|\\[5pt]
<&h(n,k)+\binom{n-2}{k-2}+\binom{n-3}{k-2}+\binom{n-4}{k-2}+2\binom{n-5}{k-2}
+\binom{n-7}{k-3}+\binom{n-8}{k-3}.
\end{align*}
Note that $k\geq 7$ implies $h(n,k)\geq \sum\limits_{2\leq i\leq 8} \binom{n-i}{k-2}$. It follows that
\begin{align*}
|\hg|<&2h(n,k)+\binom{n-5}{k-2}-\binom{n-6}{k-2}-\binom{n-7}{k-2}-\binom{n-8}{k-2}
+\binom{n-7}{k-3}+\binom{n-8}{k-3}\\[5pt]
=&2h(n,k)+\binom{n-6}{k-3}+\binom{n-7}{k-3}+\binom{n-8}{k-3}
-\binom{n-7}{k-2}-\binom{n-8}{k-2}\\[5pt]
\leq &2h(n,k)+2\binom{n-6}{k-3}-2\binom{n-8}{k-2}.
\end{align*}
Since $n\geq 4k$  and $k\geq 3$ imply
\begin{align*}
\frac{\binom{n-8}{k-2}}{\binom{n-6}{k-3}} &=\frac{(n-k-3)(n-k-4)(n-k-5)}{(n-6)(n-7)(k-2)}\\[5pt]
&\geq \frac{(3k-3)(3k-4)(3k-5)}{(4k-6)(4k-7)(k-2)}>\left(\frac{3}{4}\right)^2\times3>1.
\end{align*}
we infer $|\hg|<2h(n,k)$, contradicting \eqref{ineq-2.5hnk}.
\end{proof}

Now consider a pair $(i,P)$, $P\in \binom{[4]}{2}$, $i\in [4]\setminus P$ and $|\hg(i,[4])|> \binom{n-5}{k-2}-\binom{n-3-k}{k-2}$. Apply \eqref{hiltonLem-2} to $\hg(i,[4])$ and $\hf(P,[4])$ with $a=k-1$, $b=k-2$, we infer $|\hf(P,[4])|\leq \binom{n-5}{k-3}$. By Claim \ref{claim-2}, exactly two of $\hg(1,[4])$, $\hg(2,[4])$, $\hg(3,[4])$, $\hg(4,[4])$ have size greater than $\binom{n-5}{k-2}-\binom{n-3-k}{k-2}$. Hence, at most one  $P\in \{(1,3),(1,4),(2,3),(2,4)\}$ satisfies $|\hf(P,[4])|> \binom{n-5}{k-3}$. It follows that
\[
\sum_{P\in \hp} |\hf(P,[4])| \leq \binom{n-4}{k-2}+3\binom{n-5}{k-3}.
\]
Then by Claim \ref{claim-1}  and the identity $\binom{n-2}{k-2}=\binom{n-4}{k-2}+2\binom{n-4}{k-3}+\binom{n-4}{k-4}$,
\begin{align*}
|\hf| &\leq \sum_{P\in \hp}|\hf(P,[4])|+2|\hf_3|+|\hf_4|\\[5pt]
&\leq 3\binom{n-5}{k-3}+\binom{n-4}{k-2}+2\binom{n-4}{k-3}+ \binom{n-4}{k-4} \\[5pt]
&=\binom{n-2}{k-2}+ 3\binom{n-5}{k-3}.
\end{align*}
Since  for $n\geq 4k$,
\begin{align*}
|\hf|\leq  \binom{n-2}{k-2}+ 3\binom{n-5}{k-3}=\binom{n-2}{k-2}+\frac{3(k-2)}{n-4}\binom{n-4}{k-2}<\binom{n-2}{k-2}+\frac{3}{4}\binom{n-4}{k-2},
\end{align*}
 by \eqref{ineq-hmn-2k-22} and \eqref{ineq-hmn-2k-2} we infer
\begin{align}\label{ineq-hf1}
|\hf| &< \frac{1}{4}\binom{n-2}{k-2}+ \frac{3}{4} \left(\binom{n-2}{k-2} + \binom{n-4}{k-2}\right)\nonumber\\[5pt]
&<\frac{1}{4}\times \frac{9}{32}h(n,k)+\frac{3}{4}\times \frac{4}{9}h(n,k)\nonumber\\[5pt]
& = \frac{155}{384}h(n,k)<\frac{32}{73}h(n,k).
\end{align}

By Claim \ref{claim-2} and \eqref{ineq-newhgi},
\begin{align}\label{ineq-claim3p1}
\sum_{1\leq i\leq 4}|\hg(i,[4])|
&\leq 2\left(\binom{n-4}{k-1} - \binom{n-2-k}{k-2}\right)+2\left(\binom{n-5}{k-2}-\binom{n-3-k}{k-2}\right)\nonumber\\[5pt]
&\leq 2\left(\binom{n-3}{k-1} - \binom{n-1-k}{k-2}\right)\nonumber\\[5pt]
&= 2h(n,k)-2\binom{n-2}{k-2}-2\binom{n-3}{k-2}.
\end{align}
Adding  \eqref{ineq-hg4barub}, \eqref{ineq-claim2p2} and \eqref{ineq-claim3p1},
\begin{align}\label{ineq-hg1}
|\hg|&=|\hg(\overline{[4]})|+\sum_{\emptyset\neq P\subset[4]} |\hg(P,[4])|\nonumber\\[5pt]
& \leq 2h(n,k)+\binom{n-4}{k-2}+\binom{n-7}{k-3}+\binom{n-8}{k-3}\nonumber\\[5pt]
&\leq 2h(n,k)+\binom{n-2}{k-2}\overset{\eqref{ineq-hmn-2k-22}}< \frac{73}{32}h(n,k).
\end{align}
But now  \eqref{ineq-hf1} and \eqref{ineq-hg1} imply $|\hf||\hg|<h(n,k)^2$, contradicting \eqref{indirectAssum}. This concludes the proof of the proposition.
\end{proof}

\section{The shifted case and the proof of the main theorem}

In this section, we determine the maximum product of sizes of non-trivial shifted cross-intersecting families.

First, we determine the maximum sum of sizes of non-trivial shifted cross-intersecting families $\hf,\hg$ with  $(2,5,7,\ldots,2k-1,2k+1)\notin \hf\cup \hg$  by modifying an injective map introduced in \cite{F2017}.

\begin{prop}
Let $n\geq k>0$ and $\hf,\hg\subset \binom{[n]}{k}$ be cross-intersecting. Suppose that both $\hf$ and $\hg$ are non-trivial shifted families. Moreover $(2,5,7,\ldots,2k-1,2k+1)\notin \hf\cup \hg$. Then
\begin{align}\label{ineq-hfhg2hm}
|\hf|+|\hg|\leq 2h(n,k).
\end{align}
\end{prop}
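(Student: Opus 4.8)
The strategy is to exploit the shiftedness of both families together with the "missing set" hypothesis to build an explicit injection witnessing $|\hf|+|\hg|\le 2h(n,k)$. Write $T=(2,5,7,\ldots,2k-1,2k+1)$ for the forbidden $k$-set; note that in the shifting partial order $T$ is obtained from $[2,k+1]$ by a single shift step (replacing $k+1$ by $2k+1$-type moves), so being shifted and avoiding $T$ forces a fairly rigid structure on the "non-star" part of each family. First I would observe that, because $\hf$ is non-trivial and shifted, $[2,k+1]\in\hf$ is impossible only if... actually the key point is the opposite: a shifted non-trivial family must contain $[2,k+1]$, hence (since $\hf,\hg$ are cross-intersecting) every member of $\hg$ meets $[2,k+1]$, and symmetrically every member of $\hf$ meets $[2,k+1]$. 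So both families live inside $\hh\hm(n,k)$-type sets: each $F\in\hf$ either contains $1$ or equals a subset meeting $[2,k+1]$ with $1\notin F$.

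Next I would split $\hf=\hf(1)\cup\hf(\bar1)$ and likewise for $\hg$, and bound the four pieces. The sets in $\hf(\bar1)$ are $k$-sets avoiding $1$ that still meet every member of $\hg$; by shiftedness they are "initial" among such sets, and the avoidance of $T$ caps how large this non-star part can be. The heart of the argument is the injection: I would construct an injective map $\Phi$ from $\hf(\bar1)\cup\hg(\bar1)$ (the parts not through the common element $1$) into the complement within $\hs_{\{1\}}$ of the Hilton–Milner star, i.e. into $\{S\in\binom{[n]}{k}:1\in S,\ S\cap[2,k+1]=\emptyset\}$, paired up appropriately between the two families, so that $|\hf(\bar1)|+|\hg(\bar1)|$ together with the "deficiency" $\bigl(\binom{n-1}{k-1}-|\hf(1)|\bigr)+\bigl(\binom{n-1}{k-1}-|\hg(1)|\bigr)$ is controlled. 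Concretely, one wants: for each $F\in\hf$ with $1\notin F$, the cross-intersecting condition forbids a whole batch of $1$-containing sets from $\hg$, and the map sends $F$ to a representative of that forbidden batch; shiftedness guarantees these representatives are distinct and the avoidance of $T$ guarantees one does not overcount the single "extra" set $[2,k+1]$ that Hilton–Milner allows. Summing the two inequalities $|\hf(1)|+|\hf(\bar1)|\le h(n,k)+(\text{something})$ and its $\hg$-counterpart, the cross terms cancel and one gets $|\hf|+|\hg|\le 2h(n,k)$.

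The main obstacle is making the injection genuinely well-defined and injective when \emph{both} $\hf(\bar1)$ and $\hg(\bar1)$ are simultaneously nonempty: a set $F\in\hf(\bar1)$ forbids sets from $\hg(1)$, and a set $G\in\hg(\bar1)$ forbids sets from $\hf(1)$, so the two injections must target disjoint regions or must be combined into one injection on the disjoint union $\hf(\bar1)\sqcup\hg(\bar1)$. This is exactly where the hypothesis $T\notin\hf\cup\hg$ is used: without it, both families could contain the "shifted extremal" configuration and the representatives would collide, breaking injectivity and allowing $|\hf|+|\hg|$ to exceed $2h(n,k)$ by a controlled amount — the construction in \cite{F2017} handles the single-family case, and the modification needed here is to track which of the two families "owns" each forbidden $1$-set, using the order structure from shiftedness to break ties consistently. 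A secondary technical point is verifying that $T$ is (up to the shifting order) the unique obstruction, i.e. that once $T$ is excluded the non-star parts $\hf(\bar1),\hg(\bar1)$ are small enough that the representatives land inside $\{S:1\in S,\ S\cap[2,k+1]=\emptyset\}$ rather than spilling over; this is a short case check on the possible minimal non-trivial shifted members. Once the injection is in place the arithmetic is immediate, since $\bigl|\{S:1\in S,\ S\cap[2,k+1]=\emptyset\}\bigr|=\binom{n-k-1}{k-1}$ and $h(n,k)=\binom{n-1}{k-1}-\binom{n-k-1}{k-1}+1$ by definition.
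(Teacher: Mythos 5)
Your high-level plan correctly identifies the key ingredients and matches the paper's strategy: use shiftedness and non-triviality to force $[2,k+1]\in\hf\cap\hg$ (so every member of either family meets $[2,k+1]$), split each family according to whether $1$ is in the set, and build an injection from the non-star parts whose injectivity is guaranteed by the hypothesis that $T=(2,5,7,\ldots,2k-1,2k+1)$ is missing, allowing at most the single ``$+1$'' of Hilton--Milner. The reference to \cite{F2017} is also correct; the paper states explicitly that it modifies the injective map from that work. However, the central technical content is left undone: you never actually define the injection, and you flag this yourself as ``the main obstacle'' without resolving it. In the paper, for each $H$ with $1\notin H$ one sets $\ell(H)=\max\{\ell:|H\cap[2\ell]|\ge\ell\}$ --- well-defined precisely because $T\notin\hf\cup\hg$ together with shiftedness rules out sets $H$ lying ``above the diagonal,'' which is the first place the forbidden set is used --- and defines $\phi(H)=H\triangle[2\ell(H)]$. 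Three claims are then proved: $\phi$ is injective on each family; for $G\in\hg(\bar1)$ one has $\phi(G)\setminus\{1\}\notin\hf(1)$; and for $H\ne[2,k+1]$ with $1\notin H$, $\phi(H)$ meets $[2,k+1]$. These combine into the cross-paired inequalities $|\hf(1)|+|\hg(\bar1)|\le h(n,k)$ and $|\hf(\bar1)|+|\hg(1)|\le h(n,k)$, which add to give the result.

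Beyond leaving the construction unwritten, your sketch of what the map should look like points in a slightly wrong direction. You say the map ``sends $F$ to a representative of [the] forbidden batch,'' i.e.\ to a set through $1$ disjoint from $F$ that cross-intersection excludes from the other family. But the paper's $\phi(G)$ is not in general disjoint from $G$; the fact that $\phi(G)\setminus\{1\}\notin\hf(1)$ is established indirectly, by using shiftedness to produce from $\phi(G)$ and $G$ a pair of shifted-down sets that \emph{would} be disjoint and hence contradict cross-intersection. So ``pick a disjoint representative'' is not the right recipe, and trying to make it well-defined and injective directly is where a naive attempt would stall. Similarly, the final bookkeeping you describe --- summing per-family bounds $|\hf(1)|+|\hf(\bar1)|\le h(n,k)+(\cdot)$ with ``cross terms cancelling'' --- does not match the paper's decomposition, which pairs $\hf(1)$ with $\hg(\bar1)$ and $\hf(\bar1)$ with $\hg(1)$ (the injection naturally lands in the complement of the \emph{other} family's link at $1$). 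The plan names the right moving parts, but filling it in as stated would still require discovering the actual map and its three verification claims, which constitute the substance of the proof.
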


\begin{proof}
We claim that for every $H\in \hf\cup \hg$ there exists $\ell$ such that $|H\cap [2\ell]|\geq \ell$. Moreover, if $1\notin H$ then $\ell \geq 2$. Arguing indirectly, assume that no such  $\ell$  exists for $H=\{a_1,a_2,\ldots,a_k\}$
where $a_1<a_2<\ldots<a_k$. For $\ell=1$ this implies $a_1\geq 2$. For $\ell\geq 2$ we infer $a_\ell> 2\ell$ for all $\ell=2,\ldots,k$. By shiftedness, we see that
$(2,5,7,\ldots,2k+1)\in \hf\cup \hg$, a contradiction.  Let $\ell(H)$ be the maximal $\ell$ such that $|H\cap [2\ell]|\geq \ell$.

Recall the notation $A\triangle B = (A\setminus B)\cup (B\setminus A)$, the symmetric difference. Define the function $\phi\colon \phi(H) = H\triangle [2\ell(H)]$.

\begin{claim}\label{claim-4}
For $G,G'\in \hg$, $\phi(G)\neq \phi(G')$.
\end{claim}

\begin{proof}
If $\ell(G)=\ell(G'):=\ell$ then $\phi(G)\triangle [2\ell]=G\neq G'=\phi(G') \triangle [2\ell]$ implies $\phi(G)\neq \phi(G')$. On the other hand if $\ell(G)>\ell(G')$ then
\[
|\phi(G) \cap [2\ell(G)]|=\ell >|\phi(G') \cap [2\ell(G)]|
\]
by the maximality of $\ell(G')$.
\end{proof}

\begin{claim}\label{claim-5}
For $G\in \hg(\bar{1})$, $\phi(G)\setminus \{1\}\notin \hf(1)$.
\end{claim}

\begin{proof}
Set $\ell=\ell(G)$. Let $G=(x_1,x_2,\ldots,x_k)$ with $x_1<x_2<\ldots<x_k$. The maximal choice of $\ell$ implies
\[
[2\ell]\cap G =\{x_1,\ldots,x_\ell\}, \ x_{\ell+1}>2\ell+2,\ldots,x_k>2k.
\]
By shiftedness $(x_1,\ldots,x_\ell)\cup (2\ell+2,2\ell+4,\ldots,2k)\in \hg$. Note that $G\in \hg(\bar{1})$ implies $x_1\geq 2$. If $\phi(G)\setminus \{1\}=([2,2\ell]\setminus (x_1,\ldots,x_\ell))\cup (2\ell+2,2\ell+4,\ldots,2k)\in \hf(1)$ then by shiftedness  $([2\ell]\triangle (x_1,\ldots,x_\ell))\cup (2\ell+1,2\ell+3,\ldots,2k-1)\in \hf$ and it contradicts the cross-intersecting property.
\end{proof}

\begin{claim}\label{claim-6}
If $H\neq [2,k+1]$, $H\in \hf\cup \hg$, $1\notin H$ then $\phi(H)\cap [2,k+1]\neq \emptyset$.
\end{claim}
\begin{proof}
There are two cases. Let $\ell=\ell(H)$. If $2\ell\geq k+1$ then $[2,k+1]\subset [2\ell]$. Consequently, we can choose $x\in [2,k+1]\setminus H$ since $H\neq [2,k+1]$. Thus $x\in [2\ell]\setminus H\subset H\triangle [2\ell]$, i.e., $x\in [2,k+1]\cap \phi(H)$. As we noted before, $1\notin H$ and $(2,5,7,\ldots,2k-1,2k+1)\notin \hf\cup \hg$ imply that $\ell\geq 2$. If $k+1>2\ell$ then $\ell\geq 2$ implies the existence of $x\in  [2,2\ell]\setminus H$ and hence $x\in H\triangle [2\ell]$. It follows that $x\in [2,k+1]\cap \phi(H)$.
\end{proof}

Note that the non-triviality and the shiftedness imply $[2,k+1]\in \hf\cap \hg$. From Claims \ref{claim-4}, \ref{claim-5} and \ref{claim-6}, we infer
\begin{align}
|\hf(1)|+|\hg(\bar{1})| &=|\hf(1)|+|\phi\left(\hg(\bar{1})\setminus \{[2,k+1]\}\right)|+1\nonumber\\[5pt]
&\leq \binom{n-1}{k-1}-\binom{n-k-1}{k-1}+1.\label{ineq-hf1hgbar}
\end{align}
Switching the roles of $\hf$ and $\hg$, we obtain
\begin{align}
&|\hf(\bar{1})|+|\hg(1)| \leq \binom{n-1}{k-1}-\binom{n-k-1}{k-1}+1.\label{ineq-hfbarhg1}
\end{align}
Adding \eqref{ineq-hf1hgbar} and \eqref{ineq-hfbarhg1}, we get \eqref{ineq-hfhg2hm}.
\end{proof}

Recall the following inequality from \cite{F78}, for a proof using linear algebra cf. \cite{F2022}.


\begin{lem}[\cite{F78}]\label{lem-walk}
Let $\hf\subset \binom{[n]}{k}$ be a shifted family with $0\leq t<k$. If $[t]\cup \{t+2,t+4,\ldots,2k-t\}\notin \hf$, then
\begin{align}\label{ineq-walk}
|\hf| \leq \binom{n}{k-t-1}.
\end{align}
\end{lem}

\begin{prop}\label{lem-2.6}
Let $\hf,\hg\subset \binom{[n]}{k}$ be non-trivial and cross-intersecting, $n\geq 4k$, $k\geq 8$. If both $\hf$ and $\hg$ are shifted, then
\begin{align}\label{ineq-shiftedhfhg2}
|\hf||\hg|\leq h(n,k)^2.
\end{align}
\end{prop}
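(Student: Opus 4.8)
The plan is to combine the two structural pieces already assembled: Proposition~\ref{lem-2.4}, which handles the situation where $\hf$ (or $\hg$) becomes a star under the two shifts $S_{12}$ and $S_{34}$, and the sum-version inequality \eqref{ineq-hfhg2hm}, which controls the complementary case. Since both $\hf$ and $\hg$ are already assumed shifted, the key dichotomy is whether the ``bad'' set $D:=(2,5,7,\ldots,2k-1,2k+1)$ belongs to $\hf\cup\hg$. If $D\notin\hf\cup\hg$, then by \eqref{ineq-hfhg2hm} we have $|\hf|+|\hg|\le 2h(n,k)$, and by the AM--GM inequality $|\hf||\hg|\le\bigl(\tfrac{|\hf|+|\hg|}{2}\bigr)^2\le h(n,k)^2$, which is exactly \eqref{ineq-shiftedhfhg2}. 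So the whole content is in the case $D\in\hf\cup\hg$, say $D\in\hg$.

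When $D\in\hg$, cross-intersection forces every $F\in\hf$ to meet $D$, and since $D\subset[2,2k+1]$ with a very sparse structure, this is a strong constraint. The plan is to exploit shiftedness to pin down what $\hf$ and $\hg$ must look like. First I would use the fact that $\hg$ is shifted and contains $D$: since $D$ has $|D\cap[2\ell]|<\ell$ for every $\ell$ (by the maximality argument used in the proof of \eqref{ineq-hfhg2hm}), $D$ is in some precise sense extremal, and shiftedness propagates downward from $D$. More usefully, I would try to show that under $D\in\hg$ one of $\hf,\hg$ is forced (after possibly one more shift) into a star on a single element, so that Proposition~\ref{lem-2.4} or Observation~2.x applies; alternatively, I would bound $|\hf|$ directly. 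Concretely: $F\cap D\ne\emptyset$ for all $F\in\hf$, and if additionally $1\notin\cup\hf$ we would get a contradiction or a very small $\hf$; but $\hf$ is non-trivial, so it is not a single star, which combined with cross-intersection against a shifted $\hg\ni D$ should force $\hf$ to be extremely small, small enough that $|\hf|<\tfrac{2}{5}h(n,k)$ say, while $|\hg|\le h(n,k)^2/|\hf|$ would then have to exceed the bound from \eqref{ineq-2}-type arguments, a contradiction. This is the same iterative ``small $\hf$ forces large $\hg$ forces contradiction'' engine used throughout Section~3.

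The cleaner route, which I expect the authors take, is: assume indirectly $|\hf||\hg|>h(n,k)^2$ and reduce to Proposition~\ref{lem-2.4} by showing that if neither the pair $S_{12},S_{34}$ turns $\hf$ into a star nor the symmetric statement for $\hg$ holds, then in fact $D\notin\hf\cup\hg$ and \eqref{ineq-hfhg2hm}+AM--GM finishes; whereas if such a shift-to-star phenomenon does occur, Proposition~\ref{lem-2.4} directly gives the strict inequality $|\hf||\hg|<h(n,k)^2$. The bridge is a lemma of the form: for a shifted non-trivial cross-intersecting pair, either $D\in\hf\cup\hg$, or $S_{12}(\hf)\subset\hs_{\{1\}}$ and $S_{34}(\hf)\subset\hs_{\{3\}}$ (or the same for $\hg$). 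I would prove this by noting $S_{12}(\hf)\subset\hs_{\{1\}}$ fails exactly when $\hf(\overline{12})\ne\emptyset$ or $\hf(1)\cap\hf(2)\ne\emptyset$ (Fact~2.x), and in the shifted world $\hf(\overline{12})\ne\emptyset$ together with non-triviality and cross-intersection with a shifted $\hg$ forces the sparse set $D$ into one of the families via repeated shifting.

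The main obstacle I anticipate is the bridging step — establishing precisely which shift-to-star reductions are available, and handling the element labels correctly (the shifts in Proposition~\ref{lem-2.4} are on coordinates $(1,2)$ and $(3,4)$, while the forbidden set $D$ lives on the odd-ish coordinates $2,5,7,\ldots$). One must be careful that the two regimes genuinely cover all cases and that the non-triviality of \emph{both} families is used: non-triviality of $\hf$ rules out $\hf$ being a full star and forces $\hf(\overline{i})\ne\emptyset$ for every $i$, which is exactly what feeds Claims~3.x–3.y in Section~3. Once the case split is set up, the finish is routine: one branch is \eqref{ineq-hfhg2hm} plus AM--GM, the other is a citation of Proposition~\ref{lem-2.4}, and the equality cases are read off from the constructions listed after Theorem~\ref{main}. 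Combining Proposition~\ref{lem-2.6} with the shifting reduction (shifting preserves cross-intersection, non-triviality must be checked separately but is guaranteed as long as we only shift into stars in the cases already covered) then yields Theorem~\ref{main}.
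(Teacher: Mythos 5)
Your dichotomy on whether the set $D=(2,5,7,\ldots,2k-1,2k+1)$ lies in $\hf\cup\hg$ is in the right spirit (the paper's Case~2, $(1,3,5,\ldots,2k-1)\in\hf\cap\hg$, is exactly where one deduces $D\notin\hf\cup\hg$ via cross-intersection and shiftedness, and then \eqref{ineq-hfhg2hm} plus AM--GM finishes). But the proposal has a genuine gap in the complementary case, and the tool you reach for cannot work.

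The problem is that Proposition~\ref{lem-2.4} is structurally unavailable here. If $\hf$ is already shifted, then $S_{ij}(\hf)=\hf$ for every $i<j$, so $S_{ij}(\hf)\subset\hs_{\{i\}}$ would mean $\hf\subset\hs_{\{i\}}$, contradicting the non-triviality of $\hf$. Thus for a shifted non-trivial family the ``shift-to-star'' phenomenon simply never occurs, and the bridge lemma you propose (``either $D\in\hf\cup\hg$, or $S_{12}(\hf)\subset\hs_{\{1\}}$ and $S_{34}(\hf)\subset\hs_{\{3\}}$'') is vacuous in its second alternative; it cannot be used to dispatch the case $D\in\hf\cup\hg$. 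Proposition~\ref{lem-2.4} belongs to the proof of Theorem~\ref{main}, where the families are not yet shifted and a shift may genuinely collapse one into a star; it plays no role inside Proposition~\ref{lem-2.6}.

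What you are missing is the ``walk bound'' \eqref{ineq-walk} from \cite{F78}: if $\hf$ is shifted and $[t]\cup\{t+2,t+4,\ldots,2k-t\}\notin\hf$, then $|\hf|\le\binom{n}{k-t-1}$. The paper's Case~1 assumes (by symmetry) $(1,3,5,\ldots,2k-1)\notin\hf$, applies \eqref{ineq-walk} with $t=1$ to get $|\hf|\le\binom{n}{k-2}$, and then refines this through subcases on whether $(1,2,3,5,7,\ldots,2k-3)$ and $(1,2,4,6,\ldots,2k-2)$ are in $\hf$, using \eqref{ineq-walk} with larger $t$ and the estimates \eqref{ineq-key}, \eqref{ineq-key2}, \eqref{ineq-2}, \eqref{hhhm3n-2k-2} to force $|\hf||\hg|<h(n,k)^2$. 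None of this resembles an appeal to Proposition~\ref{lem-2.4}, and your phrase ``alternatively, I would bound $|\hf|$ directly'' is where the actual content lives; without \eqref{ineq-walk} (or something replacing it), you have no mechanism to make $|\hf|$ small in the hard case.
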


\begin{proof}
We may assume that $(1,3,5,\ldots,2k-1)\notin \hf\cap \hg$. Indeed, if $(1,3,5,\ldots,2k-1)\in \hf\cap \hg$, then by cross-intersection we infer $(2,4,\ldots,2k)\notin \hf\cup\hg$. By shiftedness, it follows that
 $(2,5,7,\ldots,2k-1,2k+1)\notin \hf\cup \hg$. By  \eqref{ineq-hfhg2hm},
 \[
 |\hf||\hg|\leq \left(\frac{|\hf|+|\hg|}{2}\right)^2\leq h(n,k)^2
 \]
 and we are done.

By symmetry assume that $(1,3,5,\ldots,2k-1)\notin \hf$. Then by \eqref{ineq-walk}  we have
\begin{align}\label{ineq-f781}
|\hf| \leq \binom{n}{k-2}.
\end{align}
By \eqref{ineq-1} and $n\geq  4k$, we obtain that
\[
|\hf|>\binom{n-3}{k-3}+\binom{n-4}{k-3}=\frac{n-3}{k-3}\binom{n-4}{k-4}+\frac{n-k}{k-3} \binom{n-4}{k-4} >7\binom{n-4}{k-4}.
\]
By  \eqref{ineq-key} and $n\geq 4k$,
\[
|\hf|> 7\binom{n-4}{k-4}>\left(\frac{4}{3}\right)^4\binom{n-4}{k-4}> \left(\frac{n-3}{n-k+1}\right)^4\binom{n-4}{k-4}> \binom{n}{k-4}.
\]
Then Lemma  \ref{lem-walk} implies that $(1,2,3,5,7,\ldots,2k-3)\in \hf$.

We distinguish two cases.

\vspace{6pt}
{\noindent \bf Case 1.} $(1,2,4,6,\ldots,2k-2)\notin \hf$.
\vspace{6pt}

In view of \eqref{ineq-walk} the assumption $(1,2,4,6,\ldots,2k-2)\notin \hf$ implies for $n\geq 4k$
\begin{align}\label{ineq-f782}
|\hf| \leq \binom{n}{k-3}\overset{\eqref{ineq-key}}{<}\left(\frac{n-2}{n-k+1}\right)^3\binom{n-3}{k-3}
<\left(\frac{4}{3}\right)^3\binom{n-3}{k-3}<3\binom{n-3}{k-3}.
\end{align}
By \eqref{ineq-2},
\begin{align}\label{ineq-f783}
|\hg| \leq \binom{n-1}{k-1}+\binom{n-2}{k-1}+\binom{n-4}{k-2}< 2\binom{n-1}{k-1}.
\end{align}

From \eqref{ineq-f782} and \eqref{ineq-f783}, we obtain
\begin{align}\label{ineq-hfhg3}
|\hf||\hg|<6\binom{n-1}{k-1}\binom{n-3}{k-3}\overset{\eqref{ineq-key2}}{\leq} 6\binom{n-2}{k-2}^2\overset{\eqref{ineq-hmn-2k-22}}<h(n,k)^2.
\end{align}

\vspace{6pt}
{\noindent \bf Case 2.}  $(1,2,4,6,\ldots,2k-2)\in \hf$.
\vspace{6pt}

By cross-intersection, $(3,5,\ldots,2k+1)\notin \hg$. Using that $\hf$ is non-trivial, $[2,k+1]\in \hf$ follows. By cross-intersection again $G\cap [2,k+1]\neq \emptyset$ for all $G\in \hg$. Consequently
\[
|\hg(1)|\leq \binom{n-1}{k-1}-\binom{n-k-1}{k-1}<h(n,k).
\]
Consider $\hg(\bar{1},2)\subset \binom{[3,n]}{k-1}$. Since $\hf(\bar{2})\neq \emptyset$ and $\hf$ is shifted, $\hf(1,\bar{2})\neq \emptyset$. Thus,
\[
|\hg(\bar{1},2)|\leq \binom{n-2}{k-1} - \binom{n-k-1}{k-1}.
\]

For $G\in \hg(\bar{1},\bar{2})$ define $\tilde{G}=\{x-2\colon x\in G\}$ and set
\[
\tilde{\hg}=\{\tilde{G}\colon G\in \hg(\bar{1},\bar{2})\}.
\]
Note that $(3,5,\ldots,2k+1)\notin \hg$ implies $(1,3,\ldots,2k-1)\notin \tilde{\hg}$. Applying \eqref{ineq-walk} with $t=1$ yields
\begin{align}\label{ineq-g1bar2bar}
|\hg(\bar{1},\bar{2})|\leq \binom{n-2}{k-2}.
\end{align}Thus,
\begin{align}\label{ineq-hg}
|\hg| \leq |\hg(1)|+|\hg(\bar{1},2)|+|\hg(\bar{1},\bar{2})|<2h(n,k).
\end{align}
By \eqref{ineq-f781} and \eqref{ineq-key}, we obtain for $n\geq 4k$
\begin{align}\label{ineqhfnk-2}
|\hf|\leq \binom{n}{k-2}\leq \left(\frac{n-1}{n-k+1}\right)^2\binom{n-2}{k-2}< \left(\frac{4}{3}\right)^2\binom{n-2}{k-2}=\frac{16}{9}\binom{n-2}{k-2}.
\end{align}
Combining \eqref{ineqhfnk-2} and \eqref{ineq-hg}, we obtain
\begin{align*}
|\hf||\hg|<\frac{32}{9}\binom{n-2}{k-2}h(n,k) \overset{\eqref{ineq-hmn-2k-22}}{<}   h(n,k)^2.
\end{align*}
\end{proof}

In order to prove Theorem \ref{main} let us introduce the notion of shift-resistant pair. For a pair of families $\hf,\hg\subset \binom{[n]}{k}$, define the quantity
\[
w(\hf,\hg) =\sum_{F\in \hf}\sum_{i\in F} i + \sum_{G\in \hg}\sum_{j\in G} j.
\]
Let us fix  $\hf,\hg\subset \binom{[n]}{k}$ so that $\hf,\hg$ are non-trivial cross-intersecting, $|\hf||\hg|$ is maximal, moreover among such pairs $w(\hf,\hg)$ is minimal. If in such pair  $\hf$ and $\hg$ are not both shifted then we say that $(\hf,\hg)$ is shift-resistant.

\begin{prop}\label{prop-4.4}
Suppose that $(\hf,\hg)$ is a shift-resistant cross-intersecting pair. Then for $\hh=\hf$ or $\hh=\hg$ there exist disjoint pairs $(a,b)$, $(c,d)$ such that $S_{ab}(\hh)\subset \hs_a$ and $S_{cd}(\hh)\subset \hs_c$.
\end{prop}
\begin{proof}
Since $\hf$ and $\hg$ are not both shifted, without loss of generality suppose $S_{ab}(\hf)\subset \hs_a$. Note that this implies $F\cap (a,b)\neq \emptyset$ for all $F\in \hf$. Hence
\[
\hs_{\{a,b\}}=\left\{S\in \binom{[n]}{k}\colon (a,b)\subset S\right\}\subset \hg
\]
follows from the maximality of $|\hf||\hg|$.

Consider an arbitrary pair $(c,d) \subset [n]\setminus (a,b)$. First note that $S_{cd}(\hg)\subset \hs_c$ cannot hold. Indeed, $\hs_{\{a,b\}}\subset \hg$ implies $\hg(\bar{c},\bar{d})\neq \emptyset$ and thereby $S_{cd}(\hg)\not\subset \hs_c$. If $S_{cd}(\hf)\subset \hs_c$ then we are done. Thus we may assume that $S_{cd}(\hg)\not\subset \hs_c$. Now the minimality of $w(\hf,\hg)$ implies $S_{cd}(\hf)=\hf$ and $S_{cd}(\hg)=\hg$ for all $(c,d)\subset [n]\setminus (a,b)$.

Let $D=(d_1,d_2,\ldots,d_{k-1})$ be the $k-1$ smallest elements of $[n]\setminus (a,b)$. Then $\hf(\bar{a},\bar{b})=\emptyset$ and $\hf(\bar{a})\neq \emptyset$ imply $\hf(\bar{a},b) \neq \emptyset$, moreover $S_{cd}(\hf)=\hf$ for all $(c,d) \subset [n]\setminus (a,b)$ implies $D\in \hf(\bar{a},b)$. Similarly, $D\in \hf(a,\bar{b})$. Thus $\hf(\bar{a},b)\cap \hf(a,\bar{b})\neq \emptyset$, contradicting $S_{ab}(\hf)\subset \hs_a$.
\end{proof}

\begin{proof}[Proof of Theorem \ref{main}]
Let $\hf,\hg\subset \binom{[n]}{k}$ be  a pair of non-trivial cross-intersecting families with $|\hf||\hg|$ maximal. Moreover, we assume that $w(\hf,\hg)$ is minimal among all such pairs. Then the minimality of $w(\hf,\hg)$ implies that either both of $\hf$, $\hg$ are shifted or $(\hf,\hg)$ forms a shift-resistant pair.If $\hf$ and $\hg$ are both shifted then by Proposition \ref{lem-2.6}  we conclude that $|\hf||\hg|\leq  h(n,k)^2$. 

If $(\hf,\hg)$ forms a shift-resistant pair, then by Proposition \ref{prop-4.4} there exist  disjoint pairs $(a,b)$, $(c,d)$ such that  either $\hf$ or $\hg$ (call it $\hh$) satisfies  $S_{ab}(\hh)\subset \hs_a$ and $S_{cd}(\hh)\subset \hs_c$. By symmetry assume that $S_{ab}(\hf)\subset \hs_a$ and $S_{cd}(\hf)\subset \hs_c$. Then by Proposition \ref{lem-2.4} we conclude that $|\hf||\hg|< h(n,k)^2$.
\end{proof}

\section{ Further improvements and concluding remarks}

The most intriguing problem is whether \eqref{ineq-hfhg2} of Theorem \ref{main} holds for the full range, that is, for all $n,k$ satisfying $n\geq 2k\geq 4$.  Note that for $n=2k$,
 $|\hf|+|\hg|\leq {2k \choose k}=2h(n,k)$ is obvious. Consequently, $|\hf| |\hg| \leq  h(n,k)^2$.


By a different argument, we can also prove Theorem \ref{main} for $n\geq 3k$ and $k\geq 15$ as well. Let us first prove a statement of some independent interest.

\begin{prop}\label{prop-5.1}
Suppose that $\hf,\hg\subset \binom{[n]}{k}$ are cross-intersecting,  $n\geq 2k$ and
$\min\{|\hf|,|\hg|\}\geq \binom{n-3}{k-3}+\binom{n-4}{k-3}$. Then
\begin{align}
|\hf|+|\hg|\leq 2\binom{n-1}{k-1}.
\end{align}
\end{prop}

\begin{proof}
By Hilton's Lemma, without loss of generality, assume that $\hf=\hl(n,k,|\hf|),\hg=\hl(n,k,|\hg|)$, $|\hf|\leq |\hg|$ and $|\hg|>\binom{n-1}{k-1}$. Thus $|\hg(1)|=\binom{n-1}{k-1}$ and $|\hf(1)|=|\hf|$. If $|\hg(\bar{1})|\leq\binom{n-2}{k-1}$ then
\[
|\hf|+|\hg|= |\hf(1,2)|+|\hf(1,\bar{2})|+|\hg(1)|+|\hg(\bar{1},2)|.
\]
Since $\hf(1,\bar{2}),\hg(\bar{1},2)\subset\binom{[3,n]}{k-1}$ are cross-intersecting, we have
\[
|\hf(1,\bar{2})|+|\hg(\bar{1},2)| \leq \binom{n-2}{k-1}.
\]
It follows that
\begin{align*}
|\hf|+|\hg|&= |\hf(1,2)|+|\hg(1)|+(|\hf(1,\bar{2})|+|\hg(\bar{1},2)|)\\[5pt]
&\leq \binom{n-2}{k-2}+\binom{n-1}{k-1}+\binom{n-2}{k-1}\\[5pt]
&=2\binom{n-1}{k-1}.
\end{align*}
Thus we may assume that $|\hg|>\binom{n-1}{k-1}+\binom{n-2}{k-1}$.

Now
\[
|\hg|=\binom{n-1}{k-1}+\binom{n-2}{k-1}+|\hg(\bar{1},\bar{2})|\mbox{ and }|\hf|=|\hf(1,2)|.
\]
By the assumption on $|\hf|$, $\hf(1,2)\subset\binom{[n-2]}{k-2}$ satisfies $|\hf(1,2)|\geq \binom{n-3}{k-3}+\binom{n-4}{k-3}$. Consequently $\{3,4\}\subset G$ for all  $G\in \hg(\bar{1},\bar{2})$. We infer $|\hg(\bar{1},\bar{2},3,4)|=|\hg(\bar{1},\bar{2})|$ and
\[
|\hf(1,2)|= \binom{n-3}{k-3}+\binom{n-4}{k-3}+|\hf(1,2,\bar{3},\bar{4})|.
\]
As $\hf(1,2,\bar{3},\bar{4})$ and $\hg(\bar{1},\bar{2},3,4)$  are cross-intersecting, we have
\[
|\hf(1,2,\bar{3},\bar{4})|+|\hg(\bar{1},\bar{2},3,4)|\leq \binom{n-4}{k-2}.
\]
Consequently,
\begin{align*}
|\hf|+|\hg|&= \binom{n-3}{k-3}+\binom{n-4}{k-3}+|\hf(1,2,\bar{3},\bar{4})|+\binom{n-1}{k-1}
+\binom{n-2}{k-1}+|\hg(\bar{1},\bar{2},3,4)|\\[5pt]
 &\leq \binom{n-1}{k-1}+ \binom{n-2}{k-1}+ \binom{n-3}{k-3}+\binom{n-4}{k-3}+\binom{n-4}{k-2}\\[5pt]
 &=2\binom{n-1}{k-1}.
\end{align*}
\end{proof}

\begin{lem}\label{lem-5.2}
Let $n\geq  2k$ and let $\hf,\hg\subset \binom{[n]}{k}$ be non-trivial cross-intersecting. Let $|\hf|=\alpha h(n,k)$ and suppose that $\alpha <1$. Set $f(\alpha) =\frac{1+\alpha^2}{(1-\alpha)^2}$. Then $|\hf||\hg|> h(n,k)^2$ implies
\begin{align}\label{ineq-keyfalpha}
f(\alpha) \geq \prod_{1\leq i\leq k-1} \frac{n-i}{n-k-i}.
\end{align}
\end{lem}
\begin{proof}
 By \eqref{ineq-1} and Proposition \ref{prop-5.1}, we infer $|\hf|+|\hg|\leq 2\binom{n-1}{k-1}$. Note that $|\hf||\hg|> h(n,k)^2$ implies  $|\hg|\geq h(n,k)/\alpha$. Therefore,
\begin{align*}
2\binom{n-1}{k-1} \geq |\hf|+|\hg| &\geq \left(\alpha+\frac{1}{\alpha}\right)h(n,k)\geq \left(\alpha+\frac{1}{\alpha}\right)\left(\binom{n-1}{k-1}-\binom{n-k-1}{k-1}\right).
\end{align*}
By rearranging,
\[
\left(\alpha+\frac{1}{\alpha}-2\right)\binom{n-1}{k-1}\leq \left(\alpha+\frac{1}{\alpha}\right)\binom{n-k-1}{k-1}.
\]
Multiplying both sides by $\alpha$ we get
\[
(1-\alpha)^2\binom{n-1}{k-1}\leq (1+\alpha^2)\binom{n-k-1}{k-1}.
\]
Thus,
\begin{align*}
f(\alpha)=\frac{1+\alpha^2}{(1-\alpha)^2} \geq \frac{\binom{n-1}{k-1}}{\binom{n-k-1}{k-1}}
=\frac{(n-1)(n-2)\ldots(n-k+1)}{(n-k-1)(n-k-2)\ldots(n-2k+1)}.
\end{align*}
\end{proof}

In case of shift-resistant families and $k\geq 9$ we succeeded to extend the proof to the whole range.

\begin{prop}\label{prop-5.3.0}
Let $n\geq  2k+1$, $k\geq 6$ and $\hf,\hg\subset \binom{[n]}{k}$ be non-trivial cross-intersecting. Suppose that  $(\hf,\hg)$ forms a shift-resistant pair. Set $f(\alpha) =\frac{1+\alpha^2}{(1-\alpha)^2}$. If
\[
\prod\limits_{1\leq i\leq k-1} \frac{n-i}{n-k-i} > f(0.64)\approx 10.8765,
\]
then
\begin{align*}
|\hf||\hg|\leq   h(n,k)^2.
\end{align*}
\end{prop}

\begin{proof}
By Proposition \ref{prop-4.4} and \eqref{ineq-hfupbound}, we have
\begin{align}
|\hf| \leq  \binom{n-2}{k-2}+\binom{n-4}{k-2}.
\end{align}
For $n=2k+1$,
\begin{align*}
\frac{\binom{n-1}{k}+\binom{n-4}{k}}{\binom{n}{k}} &=\frac{k+1}{2k+1}\left(\frac{2k(2k-1)2(k-1)+k(k-1)(k-2)}{2k(2k-1)2(k-1)}\right)\\[5pt]
&=\frac{k+1}{2k+1}\frac{3(3k-2)}{4(2k-1)}  =\frac{3}{4} \frac{3k^2+k-2}{4k^2-1} >\frac{9}{16}.
\end{align*}
Since $\binom{x-d}{k}/\binom{x}{k}$ is an increasing function of $x$, for all $n\geq 2k+1$,
\begin{align}\label{ineq-key3}
\binom{n}{k}+\binom{n-1}{k}+\binom{n-4}{k}>\frac{25}{16}\binom{n}{k}.
\end{align}
By \eqref{ineq-key3} we infer for $(n-2)\geq 2(k-2)+1$,
\[
\binom{n-2}{k-2}+\binom{n-3}{k-2}+\binom{n-6}{k-2}>\frac{25}{16}\binom{n-2}{k-2}.
\]
For $(n-4)\geq 2(k-2)+1$,
\[
\binom{n-4}{k-2}+\binom{n-5}{k-2}+\binom{n-7}{k-2}>\frac{25}{16}\binom{n-4}{k-2}.
\]
Therefore,
\[
h(n,k) \geq \sum_{2\leq i\leq 7} \binom{n-i}{k-2}\geq \frac{25}{16}\left(\binom{n-2}{k-2}+\binom{n-4}{k-2}\right)\geq \frac{25}{16} |\hf|,
\]
implying that $\alpha =\frac{|\hf|}{h(n,k)} \leq \frac{16}{25}=0.64$.  Since $f(\alpha)$ is increasing in $[0,1]$,  we obtain
\[
f(\alpha)\leq f\left(0.64\right)< \prod\limits_{1\leq i\leq k-1} \frac{n-i}{n-k-i}.
\]
Then Lemma \ref{lem-5.2} implies $|\hf||\hg|\leq   h(n,k)^2$.
\end{proof}

\begin{prop}
For $n\geq 2k$,
\begin{align}\label{ineq-key4}
\prod\limits_{1\leq i\leq k-1} \frac{n-i}{n-k-i} >\left(\frac{n-\frac{k}{2}}{n-\frac{3k}{2}}\right)^{k-1}.
\end{align}
\end{prop}

\begin{proof}
Note that for $m>d>i>0$,
\begin{align}\label{ineq-key5}
\frac{m-d-i}{m-i}\cdot \frac{m-d+i}{m+i} <\left(\frac{m-d}{m}\right)^2.
\end{align}
Equivalently,
\begin{align*}
&\frac{(m-d)^2-i^2}{(m-d)^2}<\frac{m^2-i^2}{m^2}, \mbox{ that is} \\[5pt]
&\left(\frac{i}{m}\right)^2<\left(\frac{i}{m-d}\right)^2,
\end{align*}
which is true for $m>d>0$.
Applying \eqref{ineq-key5} repeatedly with $m=n-\frac{k}{2}$ and $d=k$, we obtain
\[
\frac{(n-k-1)(n-k-2)\ldots (n-2k+1)}{(n-1)(n-2)\ldots (n-k+1)} <\left(\frac{n-\frac{3k}{2}}{n-\frac{k}{2}}\right)^{k-1}
\]
and \eqref{ineq-key4} follows.
\end{proof}

\begin{cor}\label{prop-5.2}
Let $n\geq  2k+1$, $k\geq 9$ and $\hf,\hg\subset \binom{[n]}{k}$ be non-trivial cross-intersecting. If $(\hf,\hg)$ forms a shift-resistant pair, then $|\hf||\hg|\leq   h(n,k)^2$.
\end{cor}

\begin{proof}
By Theorem \ref{main} we may assume that $2k\leq n\leq 4k$. By \eqref{ineq-key4} and $k\geq 9$, it follows that
\[
\prod\limits_{1\leq i\leq k-1} \frac{n-i}{n-k-i} >\left(\frac{n-\frac{k}{2}}{n-\frac{3k}{2}}\right)^{k-1}\geq \left(\frac{4k-\frac{k}{2}}{4k-\frac{3k}{2}}\right)^8=\left(\frac{7}{5}\right)^8\approx 14.7579>f(0.64).
\]
By Proposition \ref{prop-5.3.0}, we conclude that $|\hf||\hg|\leq   h(n,k)^2$.
\end{proof}

\begin{cor}\label{prop-5.2.2}
Let $2k+1 \leq n\leq  3.13k$, $k\geq 6$ and $\hf,\hg\subset \binom{[n]}{k}$ be  non-trivial cross-intersecting. If $(\hf, \hg)$ forms a shift-resistant pair, then $|\hf||\hg|\leq   h(n,k)^2$.
\end{cor}
\begin{proof}
By \eqref{ineq-key4} and $k\geq 6$, it follows that
\[
\prod\limits_{1\leq i\leq k-1} \frac{n-i}{n-k-i} >\left(\frac{n-\frac{k}{2}}{n-\frac{3k}{2}}\right)^{k-1}\geq \left(\frac{3.13k-\frac{k}{2}}{3.13k-\frac{3k}{2}}\right)^5=\left(\frac{2.63}{1.63}\right)^5\approx 10.9356>f(0.64).
\]
By Proposition \ref{prop-5.3.0}, we conclude that $|\hf||\hg|\leq   h(n,k)^2$.
\end{proof}

Similarly, we can prove the same statement for  $2k+1 \leq n\leq  3.54k$, $k=7$ or $2k+1 \leq n\leq  3.96k$, $k=8$.

\begin{prop}\label{prop-5.4}
Let $n\geq  3k$, $k\geq 14$ and $\hf,\hg\subset \binom{[n]}{k}$ be non-trivial cross-intersecting. If both $\hf$ and $\hg$ are  shifted, then
\begin{align*}
|\hf||\hg|\leq  h(n,k)^2.
\end{align*}
\end{prop}

\begin{proof}
By Theorem \ref{main} we may assume that $3k\leq n\leq 4k$. Suppose that $|\hf|\leq |\hg|$. By \eqref{ineq-f781} we have $|\hf|\leq \binom{n}{k-2}$. Set $\alpha=\frac{|\hf|}{h(n,k)}$. Then
\[
\frac{1}{\alpha}=\frac{h(n,k)}{|\hf|} \geq \frac{\binom{n-1}{k-1}-\binom{n-k-1}{k-1}}{\binom{n}{k-2}}.
\]
Note that $3k\leq n\leq 4k$ and $k\geq 14$ imply
\[
\frac{\binom{n-1}{k-1}}{\binom{n}{k-2}} = \frac{(n-k+2)(n-k+1)}{(k-1)n} \geq \frac{4}{3}
\]
and
\begin{align*}
\frac{\binom{n-k-1}{k-1}}{\binom{n}{k-2}}
&=\frac{(n-k-1)(n-k-2)\ldots(n-2k+1)}{(k-1)n\ldots(n-k+3)}\\[5pt]
&\leq \frac{n-k-1}{k-1} \left(\frac{n-k-2}{n}\right)^{k-2}\\[5pt]
&\leq \left(3+\frac{2}{k-1}\right)\left(\frac{3}{4}\right)^{k-2}\\[5pt]
&< 3.16\times\left(\frac{3}{4}\right)^{12}<\frac{1}{9}.
\end{align*}
It follows that $\frac{1}{\alpha} \geq \frac{4}{3}-\frac{1}{9}=\frac{11}{9}$, implying  $\alpha \leq \frac{9}{11}$. Thus,
\[
f(\alpha) \leq f\left(\frac{9}{11}\right)=50.5.
\]
By \eqref{ineq-key4} and $k\geq 14$, it follows that
\[
\prod\limits_{1\leq i\leq k-1} \frac{n-i}{n-k-i} >\left(\frac{n-\frac{k}{2}}{n-\frac{3k}{2}}\right)^{k-1}\geq \left(\frac{4k-\frac{k}{2}}{4k-\frac{3k}{2}}\right)^{13}=\left(\frac{7}{5}\right)^{13}\approx 79.3917>f\left(\frac{9}{11}\right).
\]
By Lemma \ref{lem-5.2} the proposition follows.
\end{proof}

Applying Corollary \ref{prop-5.2},  Proposition \ref{prop-5.4} and repeating the  proof of Theorem \ref{main}, we obtain the following result.

\begin{thm}\label{main3}
Suppose that $\hf,\hg\subset \binom{[n]}{k}$ are non-trivial cross-intersecting families, $n\geq 3k$, $k\geq 14$. Then $|\hf||\hg|\leq  h(n,k)^2$.
\end{thm}

The next proposition rules out many potential constructions that may prevent Theorem \ref{main} holding for the full range.

\begin{prop}\label{prop-5.3}
Let $n\geq 2k\geq 4$. Suppose that $\hf,\hg\subset\binom{[n]}{k}$, non-trivial, cross-intersecting. If $\hht^{(2)}(\hf)\cap\hht^{(2)}(\hg)\neq \emptyset$ then
\begin{align}\label{ineq-prop-0}
|\hf|+|\hg|\leq 2h(n,k).
\end{align}
\end{prop}
\begin{proof}
Without loss of generality, assume $(1,2)\cap H\neq \emptyset$ for all $H\in \hf\cup \hg$. Obviously,
\begin{align}\label{ineq-prop-3}
|\hf([2])|+|\hg([2])|\leq 2\binom{n-2}{k-2}.
\end{align}
Note that $\hf(1,\bar{2})$, $\hg(1,\bar{2})$, $\hf(\bar{1},2)$ and $\hg(\bar{1},2)$ are non-empty by non-triviality. $\hf(1,\bar{2})$ and $\hg(\bar{1},2)$ are cross-intersecting $(k-1)$-graphs on $[3,n]$. Consequently, by a classical result of Hilton and Milner \cite{HM67} concerning non-empty cross-intersecting families we obtain
\begin{align}\label{ineq-prop-1}
|\hf(1,\bar{2})|+|\hg(\bar{1},2)|\leq\binom{n-2}{k-1}-\binom{n-k-1}{k-1}+1.
\end{align}
The same is true for  $\hf(\bar{1},2)$ and $\hg(1,\bar{2})$:
\begin{align}\label{ineq-prop-2}
|\hf(\bar{1},2)|+|\hg(1,\bar{2})|\leq \binom{n-2}{k-1}-\binom{n-k-1}{k-1}+1.
\end{align}
Summing up \eqref{ineq-prop-3}, \eqref{ineq-prop-1} and \eqref{ineq-prop-2} yields
\[
|\hf|+|\hg|\leq 2\left(\binom{n-2}{k-2}+\binom{n-2}{k-1}-\binom{n-k-1}{k-1}+1\right)=2h(n,k).
\]
\end{proof}

Let us mention that the case $k=2$, that is, if $\hf$ and $\hg$ are  non-trivial cross-intersecting graphs then $|\hf||\hg|\leq 9$ is easy to prove. If $\hf$ and $\hg$ share an edge then $|\hf|+|\hg|\leq 6$ follows from Proposition \ref{prop-5.1} and thereby $|\hf||\hg|\leq 9$. Arguing indirectly assume $|\hf|\geq 4$ and let $(x,y)$ be an edge of $\hg$. Non-triviality of $\hg$ implies that both $x$ and $y$ have degree two in $\hf$. Let $u$ and $v$ be the neighbors of $x$ in $\hf$. Then the only candidate for an edge in $\hg$ that does not contain $x$ is $(u,v)$ whence $(u,v)\in\hg$. By cross-intersection the two neighbors of $y$ in $\hf$ must be $u$ and $v$ as well. Now cross-intersection implies $\hg=\{(x,y),(u,v)\}$ whence $|\hf||\hg|=8<9$.

{\bf\noindent Remark.} There are many cases of equality in \eqref{ineq-prop-0}. Namely, let $A,B\in \binom{[3,n]}{k-1}$. Set
\[
\hf(1,\bar{2}) = \{ A\},\ \hg(\bar{1},2)=\left\{G\in \binom{[3,n]}{k-1}\colon G\cap A\neq\emptyset\right\}.
\]
There are two possibilities for $B$.
\begin{itemize}
  \item[(a)] $\hf(\bar{1},2) = \{B\},\ \hg(1,\bar{2})=\left\{G\in \binom{[n]}{k-1}\colon G\cap B\neq\emptyset\right\}$,
  \item[(b)] $\hf(\bar{1},2) = \left\{F\in \binom{[n]}{k-1}\colon F\cap B\neq\emptyset\right\},\ \hg(1,\bar{2})=\{B\}$.
\end{itemize}
Add $\hf(1,2)=\hg(1,2)=\binom{[3,n]}{k-2}$. Clearly $|\hf||\hg|=h(n,k)^2$ holds only in case (b).

It is natural to consider the analogous product version of the more general Hilton-Milner-Frankl Theorem. To state this result let us define two types of families
\begin{align*}
&\hh(n,k,t) =\left\{H\in \binom{[n]}{k}\colon [t]\subset H, H\cap [t+1,k+1]\neq \emptyset\right\}\cup \left\{[k+1]\setminus \{j\}\colon 1\leq j\leq t\right\},\\[5pt]
&\ha(n,k,t) =\left\{A\in \binom{[n]}{k}\colon |A\cap [t+2]|\geq t+1\right\}.
\end{align*}

\begin{thm}
Suppose that $\hf\subset \binom{[n]}{k}$ is non-trivial and $t$-intersecting, $n>(k-t+1)(t+1)$. Then
\begin{align}\label{thm-5.1}
|\hf| \leq \max\left\{|\hh(n,k,t)|,|\ha(n,k,t)|\right\}.
\end{align}
\end{thm}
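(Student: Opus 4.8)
The plan is to prove the Hilton--Milner--Frankl Theorem~\eqref{thm-5.1} by the shifting method, following the same skeleton as the proof of Theorem~\ref{main} but with the $t$-intersecting structure replacing the cross-intersecting setup. First I would reduce to the shifted case: applying the shifting operations $S_{ij}$ one by one preserves both $|\hf|$ and the property of being $t$-intersecting, and (as is standard, cf.\ \cite{F87}) a fully shifted $t$-intersecting family that is non-trivial stays non-trivial unless the shift collapses $\hf$ onto a sub-star of some $\hs_T$ with $|T|=t$; one must track that exceptional ``trouble'' case separately, exactly as in the proof of Theorem~\ref{main}, and show that when it occurs $|\hf|$ is bounded by $|\hs_T|$ up to a correction term that keeps it below $\max\{|\hh(n,k,t)|,|\ha(n,k,t)|\}$. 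So the heart of the matter is the shifted case.

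For a shifted non-trivial $t$-intersecting $\hf$, the key dichotomy is whether the ``canonical'' extremal set appears. Concretely I would split on whether $[t]\cup\{t+2,t+4,\ldots,2k-t\}\in\hf$. If it does not, then Frankl's walk inequality~\eqref{ineq-walk} gives $|\hf|\le\binom{n}{k-t-1}$, which for $n>(k-t+1)(t+1)$ (and the relevant ranges of parameters) is dominated by $|\ha(n,k,t)|$ via repeated application of~\eqref{ineq-key}; here one may need to peel off one more ``layer'' and apply~\eqref{ineq-walk} again with the shifted-down family, mirroring Subcases~1.1--1.3 of Proposition~\ref{lem-2.6}. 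If the canonical set \emph{does} appear, then shiftedness forces $[k+1]\setminus\{j\}\in\hf$ for each $j\in[t]$, hence $[t]\not\subset$-some members but every $H\in\hf$ must meet each $[k+1]\setminus\{j\}$ in $\ge t$ elements; analyzing $\hf$ according to $H\cap[t+2]$ and iterating, one shows $\hf\subset\hh(n,k,t)$ or $\hf\subset\ha(n,k,t)$, and in either case $|\hf|\le\max\{|\hh(n,k,t)|,|\ha(n,k,t)|\}$ directly. The comparison of the two candidate sizes $|\hh(n,k,t)|$ and $|\ha(n,k,t)|$ is itself a routine but slightly delicate binomial estimate (the crossover happens around $n$ linear in $k$), and one checks that the hypothesis $n>(k-t+1)(t+1)$ is exactly what is needed for the walk-inequality branch to be controlled.

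The main obstacle I anticipate is the bookkeeping in the ``trouble'' case of the shifting reduction: when a shift would send $\hf$ into $\hs_T$ for some $t$-set $T$, one cannot simply shift blindly, and one must argue — as in the proof of Theorem~\ref{main} where $[3,k+1]\in\hf(1)\cap\hf(2)$ was used to resume shifting — that after shifting in the complementary coordinates the obstruction disappears, using non-triviality to locate a witnessing set. For $t\ge 2$ there are more coordinates to juggle and the corresponding ``resume'' argument needs the analogue of Proposition~\ref{lem-2.4}, i.e.\ a direct estimate of $|\hf|$ when $S_{ij}(\hf)\subset\hs_{T}$ for several disjoint would-be shifts; I would expect the bulk of the technical work, and the reason a clean-room writeup is long, to live there. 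The branch using~\eqref{ineq-walk} is, by contrast, essentially immediate once the reduction is in place.
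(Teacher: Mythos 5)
The paper does \emph{not} prove this theorem. It is stated in Section~5 purely as a known result (the Hilton--Milner--Frankl Theorem) and the authors only survey its history: Frankl~\cite{F782} for $t=2$, Frankl and F\"uredi~\cite{FFuredi2} for $t\geq 20$, and Ahlswede--Khachatrian~\cite{AK} in full generality. So there is no in-paper proof against which to compare your sketch, and you should not expect one; the theorem is quoted as background for the announced product version~\eqref{thm-5.2}.

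On the sketch itself: the outer shape (shift, then branch on whether $[t]\cup\{t+2,t+4,\ldots,2k-t\}\in\hf$, invoking~\eqref{ineq-walk} in the negative branch) is indeed in the spirit of Frankl's approach for large $n$. But the key structural claim in the positive branch is incorrect: $[t]\cup\{t+2,\ldots,2k-t\}\in\hf$ does \emph{not} force $[k+1]\setminus\{j\}\in\hf$ by shiftedness, because $[k+1]\setminus\{j\}$ does not precede $[t]\cup\{t+2,\ldots,2k-t\}$ in the shifting partial order --- already its $j$-th smallest element is $j+1>j$ (and for $j=1$ the first element is $2>1$). More fundamentally, the hypothesis $n>(k-t+1)(t+1)$ allows $n$ to sit right at the threshold where $\binom{n}{k-t-1}$ from~\eqref{ineq-walk} is no longer dominated by $\max\{|\hh(n,k,t)|,|\ha(n,k,t)|\}$, and the families $\ha(n,k,t)$ actually become optimal; this is precisely the regime that required Ahlswede and Khachatrian's quite different generating-set method rather than a walk/shifting estimate. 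Your plan is a plausible road map for $n$ large relative to $k$ and $t$, but it would not yield the theorem in the stated range, and in any case it answers a question the paper does not attempt.
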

For $t=1$, $|\hh(n,k,t)|\geq |\ha(n,k,t)|$ and \eqref{thm-5.1} reduces to the Hilton-Milner Theorem. For $t=2$ and $n>n_0(k,t)$ it was proved in \cite{F782}. For $t\geq 20$ and all $n>(k-t+1)(t+1)$ it follows from \cite{FFuredi2}, however it was first proved in full generality by  \cite{AK}.

Let us conclude this paper by announcing the corresponding product version.

\begin{thm}[\cite{FW2022-2}]
Suppose that $\hf,\hg\subset \binom{[n]}{k}$ are non-trivial and cross $t$-intersecting, $n\geq4(t+2)^2k^2$, $k\geq 5$. Then
\begin{align}\label{thm-5.2}
|\hf||\hg| \leq \max\left\{|\hh(n,k,t)|^2,|\ha(n,k,t)|^2\right\}.
\end{align}
\end{thm}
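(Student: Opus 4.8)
The plan is to follow the architecture of the proof of Theorem~\ref{main}. One applies simultaneous shifting to $\hf$ and $\hg$; the only obstruction is a shift $S_{ij}$ collapsing one family into a generalized $t$-star $\hs_T$ with $|T|=t$. As in the passage from Section~3 to Section~4 of the present paper, these \emph{trouble} cases are dispatched first by crude bounds, and the residual \emph{shifted} case by the walk bound \eqref{ineq-walk} of \cite{F78} together with an injective-map argument generalising Claims~\ref{claim-4}--\ref{claim-6}. Since $n\ge 4(t+2)^2k^2$, one should record at the outset that, to leading order, $|\hh(n,k,t)|/|\ha(n,k,t)|\approx(k-t)/(t+2)$, so the $\max$ is genuine: $\hh$ dominates when $k\ge 2t+3$ and $\ha$ when $k\le 2t+1$, and the argument must decide which regime it is in rather than always aiming at $\hh$.

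First I would assemble the cross-$t$-intersecting analogues of the tools of Sections~2--3: a Hilton/Kruskal--Katona lemma in lexicographic form bounding $|\hg|$ once $|\hf|$ crosses a lex threshold (the cross $t$-intersecting analogues of \eqref{hiltonLem-1} and \eqref{hiltonLem-2}); the observation that $S_{ij}(\hf)\subset\hs_T$ forces $|\hf(P,Q)|\le\binom{n-|Q|-t}{k-|P|-t}$ for $Q$ disjoint from $T$, hence $|\hf|\le\binom{n-t}{k-t}$; and invariance of cross $t$-intersection under shifting. With these, the trouble case goes as in Proposition~\ref{lem-2.4}: one pins each $F\in\hf$ to a bounded window $[m,n]$ with $|F\cap[m,n]|\le k-t-1$, then iteratively sharpens $|\hf|$ well below $|\hh(n,k,t)|/k$ while controlling $|\hg|$ by $\binom{n}{k-t}$-type estimates, so the product drops below both $|\hh|^2$ and $|\ha|^2$. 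The ratio $\binom{n}{k}/\binom{n-t}{k-t}\approx(n/k)^t$ between the ambient space and a $t$-star is precisely why several sharpening rounds --- and hence $n$ quadratic in $tk$ rather than linear --- are needed.

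For the shifted non-trivial case, let $W=[t]\cup\{t+2,t+4,\dots,2k-t\}$ be the critical walk set of \eqref{ineq-walk} and let $U=\{w+1:w\in W\}$, so that $|W\cap U|=t-1<t$. In Case~1, some family, say $\hf$, omits $W$: then \eqref{ineq-walk} gives $|\hf|\le\binom{n}{k-t-1}$, and subcasing on which further threshold sets $\hf$ contains shrinks $|\hf|$ while the indirect assumption pushes $|\hg|$ past its a~priori ceiling $\binom{n-t}{k-t}+\binom{n-t-1}{k-t}+\cdots$ --- unless $\hf$ is forced into an $\ha$-shaped configuration, in which case the pair is bounded by $|\ha(n,k,t)|^2$. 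In Case~2, $W\in\hf\cap\hg$, whence cross $t$-intersection gives $U\notin\hf\cup\hg$ and, by shiftedness, the corresponding bumped walk set lies outside $\hf\cup\hg$; then, imitating the Section~4 proposition at level $t$, one sets $\ell(H)$ to be the largest $\ell$ with $|H\cap[t+2\ell]|\ge t+\ell$ (well defined except when $H\in\{[k+1]\setminus\{j\}:1\le j\le t\}$), puts $\phi(H)=H\triangle[t+2\ell(H)]$, and proves the analogues of Claims~\ref{claim-4}--\ref{claim-6}: $\phi$ is injective on $\hg$; $\phi$ moves each restricted piece of $\hg$ off the matching piece of $\hf$ by cross-intersection; and for $H\notin\{[k+1]\setminus\{j\}\}$ one has $\phi(H)\cap[t+1,k+1]\ne\emptyset$. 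This yields $|\hf|+|\hg|\le 2|\hh(n,k,t)|$, hence $|\hf||\hg|\le|\hh(n,k,t)|^2$ by AM--GM --- provided we are in the $\hh$-dominated regime; in the $\ha$-dominated regime ($k\le 2t+1$) the same configurations must instead be bounded via a structural dichotomy forcing $|\hf|,|\hg|\le|\ha(n,k,t)|$.

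I expect the reconciliation just flagged to be the main obstacle. The injective-map argument at general $t$ is already more delicate than at $t=1$: one must accommodate all $t$ exceptional sets $[k+1]\setminus\{j\}$ simultaneously, choose the bumped walk set so that the cross-intersecting obstruction in the analogue of Claim~\ref{claim-5} survives the double shift-down (here \eqref{ineq-walk} and shiftedness must cooperate), and handle the threshold split $t+2\ell\ge k+1$ versus $t+2\ell<k+1$ of Claim~\ref{claim-6} with $t$ extra coordinates. The deeper issue is that the sum bound $2|\hh(n,k,t)|$ is simply false when $\ha$ is extremal --- already $\hf=\hg=\ha(n,k,t)$ lies in ``Case~2'' --- so the proof cannot rest on a single injective map; one needs a separate, $\ha$-targeting argument, plausibly \eqref{ineq-walk} applied to the shift-down of each family to show that any configuration not dominated by a genuine $t$-star has size at most $|\ha(n,k,t)|$, together with a clean criterion selecting which argument applies. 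Fitting all the crude estimates of the trouble case and of Case~1 under $n\ge 4(t+2)^2k^2$, where the quadratic dependence on $k$ is forced by the $(n/k)^t$ gap to the $t$-star, is the remaining routine but heavy task.
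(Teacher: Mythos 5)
The theorem you are trying to prove is not actually proved in this paper: it is \emph{announced} in the concluding section, with the proof deferred to the reference \cite{FW2022-2}, which is listed as ``in preparation.'' There is therefore no proof of record here to compare against, and I can only evaluate your sketch on its own terms.

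On its own terms, the sketch is a plausible roadmap but not a proof, and the single most serious gap is one you yourself flag and then leave open: the injective-map argument (the $t$-analogue of Claims~\ref{claim-4}--\ref{claim-6}) yields only $|\hf|+|\hg|\le 2|\hh(n,k,t)|$ via AM--GM, and this bound is simply false once $\ha(n,k,t)$ is the larger configuration, i.e.\ when $k\le 2t+1$. Your Case~2 as formulated contains $\hf=\hg=\ha(n,k,t)$ (which does contain the walk $W$), so the dichotomy you draw between Case~1 and Case~2 does not separate the $\hh$-extremal and $\ha$-extremal regimes, and you offer no mechanism that does. Saying that ``one needs a separate, $\ha$-targeting argument, plausibly \eqref{ineq-walk} applied to the shift-down of each family'' names the problem, not a solution; without a concrete criterion deciding which of $\hh$ or $\ha$ to aim at and a matching bound in each regime, the central step is missing. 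Two smaller issues: the approximate ratio $|\hh|/|\ha|$ is $(k-t+1)/(t+2)$, not $(k-t)/(t+2)$, so the crossover is at $k=2t+2$ rather than splitting cleanly at $2t+1$ versus $2t+3$; and your generalized $\ell(H)$ and $\phi(H)=H\triangle[t+2\ell(H)]$ are reasonable guesses, but you have not verified that $\phi$ restricted to $\hg$ actually avoids $\hf$ under cross $t$-intersection --- the $t=1$ Claim~\ref{claim-5} proof uses a rather specific double shift-down whose $t$-analogue would need to be checked, not assumed. In short, this is a well-informed outline with the right toolkit, but the announced theorem cannot be certified from it.
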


\end{document}